\documentclass[a4paper]{amsart}
\usepackage[utf8]{inputenc}
\usepackage[english]{babel}

\usepackage{amscd,amssymb}
\usepackage{amsmath}
\usepackage{amsthm}
\usepackage{graphicx}
\usepackage{fancybox}
\usepackage{epic,eepic}
\usepackage{amstext}
\usepackage{mathtools}
\usepackage{esint}
\usepackage[square,numbers]{natbib}
\usepackage{booktabs}
\usepackage[hide links]{hyperref}
\usepackage{comment}
\usepackage{mathtools}
\usepackage{tikz,pgfplots}
\usepackage{subcaption}

\usepackage{autonum}

\newtheorem{theorem}{Theorem}[section]
\newtheorem{proposition}[theorem]{Proposition}

\newtheorem{lemma}[theorem]{Lemma}

\theoremstyle{definition}
\newtheorem{definition}[theorem]{Definition}

\theoremstyle{remark}
\newtheorem{remark}[theorem]{Remark}
\numberwithin{theorem}{section}
\numberwithin{equation}{section}
\numberwithin{figure}{section}

\usepackage{xspace}
\newcommand{\Chebname}{Chebyshev\xspace}

\newcommand{\calK}{\mathcal{K}}

\newcommand{\calO}{\mathcal{O}}
\newcommand{\calP}{\mathcal{P}}

\newcommand{\frakL}{\mathfrak{L}}
\newcommand{\frakM}{\mathfrak{M}}

\newcommand{\Knz}[1]{\calK^{#1}}
\newcommand{\knz}[1]{k^{#1}}

\newcommand{\Sigman}[1]{\Sigma^{#1}}
\newcommand{\Sigmanll}{\Sigma^{\lambda,\Lambda}}

\newcommand{\numNZ}{\eta}
\newcommand{\numNZrowSymbol}{\eta}
\newcommand{\numNZrowi}[1]{\numNZrowSymbol_{#1}}
\newcommand{\numNZrowvector}{\boldsymbol{\numNZrowSymbol}} 
\newcommand{\numNZrowMax}{\numNZrowSymbol_\mathrm{max}}

\newcommand{\uppercondbound}{\widehat{\kappa}}
\newcommand{\mineig}{\lambda_{\min}}
\newcommand{\maxeig}{\lambda_{\max}}

\newcommand{\SparseSpacekk}{\mathcal{S}^{\numNZrowvector}}

\newcommand{\layers}{L}
\newcommand{\entries}{M}
\newcommand{\layersfun}{\frakL}
\newcommand{\entriesfun}{\frakM}
\newcommand{\Reals}{\mathbb{R}}
\newcommand{\NatNum}{\mathbb{N}}
\newcommand{\Id}{\mathrm{I}}
\newcommand{\rhs}{r}

\newcommand{\NNs}[0]{\Phi}
\newcommand{\weight}[0]{W}
\newcommand{\bias}[0]{b}

\newcommand{\activation}[0]{\varrho}

\newcommand{\NNvec}[0]{\mathrm{x}}

\newcommand{\norm}[1]{\left\lVert#1\right\rVert}

\newcommand{\ScaleConst}{c_{\textup{sc}}}
\newcommand{\onestepoperator }{R}
\newcommand{\onestepoperatorVTWO }{S} 

\title[Complexity of NNs to solve structured linear systems]{Complexity bounds on neural networks for the solution of structured linear systems of equations}
\author[B.~D\"orich, R.~Maier, L.~Ullmer]{Benjamin D\"orich${}^*$, Roland Maier${}^*$, Lukas Ullmer${}^*$}
\address{${}^*$Institute for Applied and 
Numerical Mathematics, Karlsruhe Institute of Technology, Englerstr.~2, 76131 Karlsruhe, Germany}
\email{\{benjamin.doerich,roland.maier\}@kit.edu}

\begin{document}

\begin{abstract}
We derive upper bounds on the complexity of ReLU neural networks approximating the solution of a linear system given the matrix and the right-hand side. 
We focus on matrices which are symmetric positive definite and sparse, as they appear in the context of finite difference and finite element methods. 
For such matrices, we extend available results for the matrix inversion to the task of solving a linear system, where we leverage favorable properties of classical methods such as the modified Richardson and the conjugate gradient method.
Our bounds on the number of layers and neurons are not only explicit with respect to the size of the matrices, but also with respect to their condition numbers. 
\end{abstract} 

\keywords{Approximation theory, neural networks, iterative methods for linear systems}

\subjclass{
35A35, 
41A25, 
41A46, 
65N30, 
68T07, 
}

\maketitle

\section{Introduction}

In this work, we investigate the complexity of neural networks for the approximate solution of linear systems of the form
\begin{equation} \label{eq:LGS_intro}
    A x = \rhs,
\end{equation}
where $A \in \Reals^{n \times n}$, $n \in \NatNum$ is a (possibly large) matrix with a certain structure, $\rhs\in \Reals^n$ a given vector, and $x \in \Reals^n$ the (unknown) solution to the system. Specifically, we have in mind linear systems that arise in the context of finite element or finite difference methods.
These have the property that $A$ is typically sparse with a known sparsity structure, symmetric, and in general have a large condition number. The main goal is to understand the realization of the mapping $(A,\rhs) \mapsto x = A^{-1} \rhs$ via classical feed-forward neural networks. While we do not aim at practically realizing such a construction, the main goal is to understand the complexity of a possible realization and, particularly, its dependence on the condition number and the sparsity pattern. In essence, this shall provide a rough estimation on the necessary size of a neural network to realize the solution of a linear system of the form~\eqref{eq:LGS_intro} for a specific class of matrices. 

The study of approximation properties and sufficient complexity bounds on neural networks for different tasks is an active field of research. First theoretical results on approximation capabilities for certain classes of functions have been presented in~\cite{Cyb89,HorSW89} but without explicit dependencies on the size of the networks. An approximation result with explicit rates has first been obtained in~\cite{Bar93} and important findings on the approximation of a scalar multiplication operator based on the polarization formula are presented in~\cite{Yar17}, which have been employed to approximate smooth functions with ReLU neural networks in the $L^\infty$-norm. These findings can be extended to approximation results in Sobolev norms, see~\cite{GueKP20}. 
In~\cite{PetV18}, a framework for studying approximation properties of neural networks has been presented that is based on the idea of using smaller neural networks as building blocks for certain, more complicated, approximation tasks. It has been used to show approximation results in the context of, e.g., higher-order finite elements~\cite{OpsJPS20,OpsS24} and parametrized/high-dimensional partial differential equations~\cite{ElbGJS21,KutPRS22,MarS23} (see also~\cite{GeiPRSK21} for a corresponding numerical study on the complexity).
An overview on the expressivity of neural networks can be found in~\cite{GuhRK23}. 

The main goal of this work is to approximate the solution process of a potentially sparse linear systems of equations by mimicking iterative algorithms. In~\cite{KutPRS22}, the inversion of a matrix is approximated up to arbitrary precision by a neural network based on the Neumann series. Further, upper bounds on the size of the neural network are presented in terms of depth and number of non-zero neurons. While this approach could be directly used to construct a neural network that implements the solution of a linear system, the overall complexity scales worse than $n^3$ for solving a system with~$n$ unknowns, which even exceeds the effort for inverting the matrix directly. 
Note that the main motivation in~\cite{KutPRS22} are rather small matrices and uniformly bounded condition numbers in the context of a reduced basis method, where the cubic dependence is not severe. 
In contrast, we have large matrices in mind, where such a complexity is prohibitively expensive.
Particularly for sparse linear systems in the context of, e.g., finite element methods, the complexity bounds appear suboptimal, as well-known classical iterative methods exist that do not require the inversion of the matrix and can even solve such systems with almost linear complexity. 
Note that the dominant cubic scaling in~\cite{KutPRS22} can be improved to $n^{\log_2 7} \approx n^{2.8}$ as recently proposed in~\cite{RomBP26*}. This is possible by constructing the networks for the matrix-matrix multiplication via the \emph{Strassen algorithm} 
\cite{Str69} and implementing those in the Neumann series. 
For sparse linear systems, this is still suboptimal and to the best of our knowledge, there are no other works studying the complexity analysis of the solution of linear systems of equations specifically.

In this work, we aim at constructing a neural network that realizes the solution of a linear system of equations and which makes use of iterative algorithms, particularly for sparse systems. Exemplarily, we construct  neural networks that realize the
modified Richardson and the cg-algorithm. 
Compared to the mentioned works, we pay special attention to the role of the condition number and, in particular, prove our estimates with an explicit tracing of the condition number. One of the main findings is that the complexity suffers from large condition numbers, mainly since the underlying iterative methods deteriorate in this case. Note, however, that our approach opens up the possibility to study the combination of iterative methods with pre-conditioners. This can enable the construction of upper bounds on the necessary size of neural networks which are less sensitive to large condition numbers.  

Besides the interesting question of how the (worst possible) condition of a matrix influences the construction of a neural network and in particular its size, the considerations of this work may also be applied in the context of more involved constructions to understand the complexity of neural networks for specific tasks. For instance, in~\cite{KroMP23}, the complexity estimate in~\cite{KutPRS22} directly enters in the context of replacing bottleneck computations in finite element-based multiscale methods by appropriate neural networks. The suboptimal scaling of the size of required neural networks therein is automatically and substantially improved with our construction. 

\subsection*{Outline of the paper}
We first introduce our notation in Section~\ref{sec:notation} and present for reference the important result on the approximate matrix inversion established in \cite{KutPRS22} adapted to our setting. 
In Section~\ref{sec:results}, we present the overall framework as well as our main results. Further,
we discuss the application to stiffness matrices from a finite element background
and the implications on other works that use neural networks in their algorithms. 
Several useful results, which are used throughout the analysis, are recalled in Section~\ref{sec:preps} and the building block for our main theorems are shown.
Sections~\ref{sec:Ric_proof} and \ref{sec:cg_proof} are devoted to the proofs of our main results employing the Richardson and the cg-approach, respectively. 

\section{Notation}
\label{sec:notation}

In this section, we fix the notation of several spaces and objects which are used throughout this work. 
First, we note that the dimension $n$ of the system in \eqref{eq:LGS_intro} is fixed throughout the paper. 
We denote the space of admissible matrices and vectors appearing in \eqref{eq:LGS_intro} by
\begin{equation} \label{def:matrix_size}
    \Knz{z}
    \coloneqq \{  A \in \Reals^{n\times n} \mid \|A\|_2 \leq z 
    \},
    \qquad
    \knz{z} \coloneqq \{  \rhs \in \Reals^{n} \mid \|\rhs\|_2 \leq z  \},
\end{equation}
for some $z>0$,
where $\|\cdot\|_2$ denotes the standard Euclidean norm for vectors or the spectral norm for matrices, respectively.
In addition, we need the class of symmetric, positive definite matrices with bounded spectrum, which we denote by
\begin{equation} \label{def:matrix_size}
    \Sigman{\lambda,\Lambda}
    \coloneqq \{  A \in \Reals^{n \times n} \mid A = A^\top, \lambda \leq \mineig(A) \leq \maxeig(A) \leq \Lambda 
    \},
\end{equation}
where $\mineig(A)$ and $\maxeig(A)$ denote the smallest and largest eigenvalue of a matrix~$A$, respectively. 
We denote the condition number of $A$ by $\kappa(A)$ and recall that for any $A \in \Sigman{\lambda,\Lambda}$ we have the uniform bound 
\begin{equation} \label{eq:uppercondbound}
 \kappa(A) \leq \frac{\Lambda}{\lambda}  \eqqcolon\uppercondbound.  
\end{equation}
This bound is used for $\alpha \in \{\frac12, 1\}$ in the context of the function
\begin{equation} \label{eq:def_rho_alpha}
    \rho_\alpha = \rho_\alpha (\uppercondbound) = \frac{\uppercondbound^\alpha - 1}{\uppercondbound^\alpha + 1} \in [0,1),
\end{equation}
which will later relate to the convergence rate of our methods.

We further introduce the space of matrices with a special sparsity pattern. We define the list  $\numNZrowvector = (\numNZrowvector_1,\ldots,\numNZrowvector_n)$, where 
$\numNZrowvector_i \in \{1,\ldots,n\}^{\numNZrowi{i}}$ are vectors of length $\numNZrowi{i}$, respectively. $\numNZrowvector_i$ encodes the columns in the $i$th row of a matrix, where entries are allowed to be different from zero. In particular, if $j$ is  \emph{not} an element of $\numNZrowvector_i$, then the entry $A_{i,j} = 0$ (but the reverse statement is not necessarily true).
Matrices of a sparsity pattern contained in $\numNZrowvector$ are collected in the set
\begin{equation}\label{def:sparse_pattern}
    \begin{aligned} 
    \SparseSpacekk 
    =
    \{
    A \in \Reals^{n \times n}
    &\text{
    \text{s.t. the sparsity pattern of $A$ is contained in $\numNZrowvector$}
      }
    \} ,
\end{aligned}
\end{equation}
and set $\numNZ \coloneqq \sum_{i=1}^n \numNZrowi{i}$ as the maximal number of non-zero elements, and
note that 
$n \leq \numNZ 
\leq n^2$.
Further, we define 
\begin{equation}
    \numNZrowMax = \max_{i=1,\ldots,n} \numNZrowi{i}.
\end{equation}
For the application we have in mind, it is reasonable to assume an a-priori known super-set of the sparsity pattern, and we discuss this after our main results. In addition, we discuss possible extension in Section~\ref{sec:extensions} below.

\begin{remark}
Within this work, we assume that the matrix $A$ is given in a COO-type format, i.e., there is a vector $A^\textup{v}$, which contains the $\numNZ$ values and two vectors which contain the corresponding row and column index, respectively. However, since we have a fixed sparsity pattern, the two vectors for the row and column indices do not change and can be neglected.

In particular, this allows us to abuse the notation as follows: we will interchangeably use the matrix $A$ and the vector with its (relevant) values $A^\textup{v}$. That is, a matrix~$A$ as input of a neural network (cf.~Definition~\ref{def:neuralnet} below) makes sense due to its (compressed) vector representation. Multiple matrices and/or vectors as inputs have to be understood as a long vector containing all the input values. 
\end{remark}

Next, we present the type of neural network which we consider in this work.
\begin{definition} \label{def:neuralnet}
Let $L \in \NatNum $. 
A neural network $\NNs$ of depth $\layers$ consists of a list of
matrix-vector-tupels
\begin{equation}
    \NNs = \left[(\weight_1,\bias_1),(\weight_2,\bias_2),\ldots,(\weight_\layers,\bias_\layers)\right], 
\end{equation}
where the weights $\weight_\ell$
are a matrix in $ \Reals^{N_\ell \times N_{\ell-1}}$ 
and the corresponding bias $\bias_\ell$
is a vector in $ \Reals^{N_\ell}$
with $N_0,\ldots,N_\layers \in \NatNum$. 
We call $ \mathrm{dim}_{\mathrm{in}}(\NNs):=N_0 $
the input dimension and
$ \mathrm{dim}_{\mathrm{out}}(\NNs):=N_\layers $ the output dimension. 
Together with the ReLU activation function $\activation \colon  \Reals 
\to  \Reals$, $x \mapsto \max\{0,x\}$ 
and the input vector $\NNvec$,
the neural network is recursively defined via 
\begin{align}
    \NNvec_0 &:= \NNvec, \\
    \NNvec_\ell &:= \activation(\weight_\ell\NNvec_{\ell-1} + \bias_\ell) \quad \text{for } \ell = 1,\ldots, \layers-1,  \\
    \NNvec_\layers &:= \weight_\layers\NNvec_{\layers-1}+\bias_\layers,
\end{align}
where $\activation$ acts component-wise on vectors by convention.
Overall, for a given input~$\NNvec_0$ the output $\NNvec_\layers$
of $\NNs$ is given by the relation
\begin{equation}
    \NNs(\NNvec_0)= \NNvec_\layers
\end{equation}
and the neural network $\NNs$ can be understood as a map 
$ \Reals^{N_0} \to  \Reals^{N_\layers}$. 
The network is said to have $\layers$ layers
and $N_j$ is the number of neurons in the $j$-th layer.
With $\norm{A}_0$ the number of nun-zero entries of $A$, 
for $\ell\leq \layers$ we denote by 
\begin{equation} 
    \entries_\ell:=\norm{\weight_\ell}_0+\norm{\bias_\ell}_0
\end{equation}
the number of weights in the $\ell$-th layer
and by  
\begin{equation}
    \entries:=\sum^\layers_{\ell=1}\entries_\ell
\end{equation}
the total number of weights of $\NNs$. 
In addition, we use the functions $\layersfun$ and $\entriesfun$ to assign a neural network $\NNs$ its number of layers and weights, i.e.,
$\layersfun(\NNs) = \layers$ and $\entriesfun(\NNs) = \entries$.
\end{definition}

As a reference result, we present the one from the seminal work \cite{KutPRS22}, where the authors construct a neural network for a general matrix $A \in \Reals^{n\times n}$
without any sparsity pattern and under the assumption of a uniformly bounded condition number.
As an important reference, we recall the essential parts of the result using our notation introduced above.

\begin{theorem}[{cf.~\cite[Thm.~3.8]{KutPRS22}}]\label{thm:kutyniok}
Let $\delta \in (0,1)$. 
Then, for any $\epsilon\in(0,\frac{1}{4})$ there exists a neural network $\NNs^{\textup{inv}}$ such that
\begin{equation}
   \sup\limits_{A\in \Knz{1-\delta}}
                \norm{\bigl(\Id-A\bigr)^{-1}-\NNs^{\textup{inv}}\bigl(A)}_2 
                \leq 
                \epsilon,
\end{equation}
where $\mathrm{dim}_{\mathrm{in}}(\NNs^{\textup{inv}})=n^2$ and $\mathrm{dim}_{\mathrm{out}}(\NNs^{\textup{inv}})=n^2$. 
Further, with
\begin{equation}
    m_{\textup{N}} =m_{\textup{N}}(\epsilon,\delta):= \left\lceil\frac{\log_2(0.5\epsilon\delta)}{\log_2(1-\delta)}\right\rceil,
\end{equation}
there exists a constant $C_\textup{inv} >0$ which is independent of $m_{\textup{N}}$, $n$, $\epsilon$, and $\delta$ such that
\begin{itemize}
\item[(i)] $\layersfun\Bigl(\NNs^{\textup{inv}} \Bigr)
            \leq
            C_\textup{inv}  \log_2(m_{\textup{N}})
            \Bigl( \log_2(\frac{1}{\epsilon})+\log_2(m_{\textup{N}})+\log_2(n)\Bigr)$,
\item[(ii)] $\entriesfun\Bigl( \NNs^{\textup{inv}}\Bigr) 
             \leq  
             C_\textup{inv}  m_{\textup{N}} \log^2_2(m_{\textup{N}}) 
             \Bigl( \log_2(\frac{1}{\epsilon})+\log_2(m_{\textup{N}})+\log_2(n)\Bigr) n^3$.
\end{itemize}
\end{theorem}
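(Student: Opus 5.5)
The plan is to follow the Neumann-series construction of \cite{KutPRS22}. For $A\in\Knz{1-\delta}$ we have $(\Id-A)^{-1}=\sum_{k\ge0}A^k$. The network will realize the truncated series $\sum_{k=0}^{m}A^k$ with $m=2^{j}$, using repeated squaring. The truncation error satisfies
\begin{equation}
\Bigl\|\sum_{k>m}A^k\Bigr\|_2\le \frac{(1-\delta)^{m+1}}{\delta}.
\end{equation}
Choosing $m\ge m_{\textup{N}}(\epsilon,\delta)$ makes this at most $\epsilon/2$. Only $\log_2(m_{\textup{N}})$ doubling steps are then needed, which explains the factor $\log_2(m_{\textup{N}})$ in the depth bound (i).

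The basic building block is an approximate matrix-multiplication network. Following \cite{Yar17} and the polarization identity $xy=\frac14((x+y)^2-(x-y)^2)$, scalar products of inputs bounded by some $Z$ are realized to accuracy $\tau$ with $\calO(\log_2(1/\tau)+\log_2 Z)$ layers and weights. Applying this entrywise to the $n^3$ products in $BC$ gives a network $\NNs^{\textup{mult}}_{\tau}$ with $\|\NNs^{\textup{mult}}_\tau(B,C)-BC\|_2\le\tau$. Its depth is $\calO(\log_2(1/\tau)+\log_2 n)$ and it has $\calO\bigl((\log_2(1/\tau)+\log_2 n)n^3\bigr)$ weights. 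The $\log_2 n$ enters because the entrywise error has to be converted into a spectral-norm error.

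The squaring steps are then built recursively. From approximations of $A^{2^i}$ and of $S_i=\sum_{k<2^i}A^k$, we compute
\begin{equation}
A^{2^{i+1}}\approx A^{2^i}A^{2^i},\qquad S_{i+1}\approx S_i+A^{2^i}S_i.
\end{equation}
The needed intermediate quantities are carried along through identity (ReLU pass-through) networks. The pieces are combined with the concatenation and parallelization calculus of \cite{PetV18}.

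The main obstacle is error propagation. Perturbations in $A^{2^i}$ and $S_i$ are multiplied in later steps, where $\|S_i\|_2\le 1/\delta$. I would choose $\tau=\epsilon\,\delta^{c}/(c\, m_{\textup{N}})$ and prove by induction that the accumulated error after step $i$ is at most $C\,2^i\tau/\delta$. The total is then at most $\epsilon/2$ once $\log_2(1/\tau)\lesssim\log_2(1/\epsilon)+\log_2(m_{\textup{N}})$. Here $\delta$-terms are absorbed using $\log_2(1/\delta)\le\log_2 m_{\textup{N}}$, which holds since $m_{\textup{N}}\gtrsim 1/\delta$.

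Summing the costs over $\log_2 m_{\textup{N}}$ stages gives depth $\log_2(m_{\textup{N}})\bigl(\log_2(1/\epsilon)+\log_2(m_{\textup{N}})+\log_2 n\bigr)$, which is (i). For the weight count (ii) I would take a cruder accounting, in which the identity pass-through channels and partial sums across stages contribute the factor $m_{\textup{N}}\log_2^2(m_{\textup{N}})$ multiplying $n^3$. Carrying out this bookkeeping is routine once the per-step error bound is established.
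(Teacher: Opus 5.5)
The paper does not prove this statement; it only quotes \cite[Thm.~3.8]{KutPRS22}. Your doubling argument is sound, but it is a genuinely different construction from the cited one. In \cite{KutPRS22}, each power $A^k$, $k\le m_{\textup{N}}$, gets its own subnetwork, built by repeated squaring and the binary expansion of $k$, so every multiplied matrix has norm at most one. These $m_{\textup{N}}$ subnetworks run in parallel, each to accuracy of order $\epsilon/m_{\textup{N}}$, and their outputs are summed. That parallel evaluation is exactly where the factor $m_{\textup{N}}\log_2^2(m_{\textup{N}})$ in (ii) comes from.

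Your recursion $S_{i+1}=(\Id+A^{2^i})S_i$ uses only about $2\log_2 m_{\textup{N}}$ matrix products. Counted honestly, it gives $\entriesfun(\NNs^{\textup{inv}})\lesssim\log_2(m_{\textup{N}})\bigl(\log_2(\frac1\epsilon)+\log_2(m_{\textup{N}})+\log_2(n)\bigr)n^3$ with the same depth order. This is strictly better than (ii), which then follows a fortiori. Your remark that identity pass-through channels produce the factor $m_{\textup{N}}\log_2^2(m_{\textup{N}})$ is therefore misattributed: nothing in your network generates a factor $m_{\textup{N}}$, and no cruder bookkeeping is needed.

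The price of your route is a more delicate stability analysis, and three details need adjusting.
\begin{itemize}
\item The multiplication networks must accept $\tilde S_i$ of norm up to about $2/\delta$. This is harmless: $m_{\textup{N}}\ge 1/(2\delta)$ gives $\log_2(1/\delta)\le\log_2(m_{\textup{N}})+1$, and constants are absorbed by $\log_2(1/\epsilon)> 2$.
\item Your induction must carry the bounds $\|\tilde P_i\|_2\le 2$ and $\|\tilde S_i\|_2\le 2/\delta$. Otherwise you cannot guarantee that the product networks are only evaluated where they are accurate.
\item Errors are amplified by $1+\|\tilde P_i\|_2$ at every stage. The straightforward recursion $s_{i+1}\le(2+p_i)s_i+\tau+p_i\|S_i\|_2$, with $p_i\lesssim 2^i\tau$, only yields $s_i\lesssim i\,2^i\tau/\delta$. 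That is one factor $i$ more than your claimed $C2^i\tau/\delta$, so take for instance $\tau\sim\epsilon\delta/m_{\textup{N}}^2$. Any such polynomial loss in $m_{\textup{N}}$ and $1/\delta$ only changes constants in $\log_2(1/\tau)$.
\end{itemize}

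As in the quoted statement itself, the case $m_{\textup{N}}=1$ must be excluded: there the right-hand sides of (i) and (ii) vanish, so no network can satisfy them.
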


The result in Theorem~\ref{thm:kutyniok} is the starting point for our investigations. In the upcoming section, we aim to exploit the knowledge of a certain pattern that matrices in~$\SparseSpacekk$ 
have; cf.~\eqref{def:sparse_pattern}. The goal is to reduce the computational complexity (in the sense of number of layers and neurons) also for matrices with a large condition number. In particular, we aim for a better scaling of the number of weights in the dimension $n$.

\section{Main results}
\label{sec:results}

In this section, we first present our main results and apply them to a class of matrices as they usually appear in the context of finite element or finite difference methods. In particular, we are interested in symmetric and positive definite matrices with specific sparsity patterns. 
As an example of special interest and based on the notation in~\eqref{def:sparse_pattern}, we will mainly consider the case
\begin{equation}
\numNZ =  \mathcal{O}(n),
\quad
\kappa(A) \gg 1.
\end{equation}
In the context of finite elements,
$\numNZrowi{i}$ represents the number of neighboring nodes of the $i$th node $a_i$ in the mesh, which is a uniformly bounded number even under refinement. This directly implies the scaling of $\numNZ$. 
However, we could also account for systems with linear constraints such as the Poisson problem with Neumann boundary conditions, where the last row consists of $\mathcal{O}(n)$ non-zero elements. 
Since we aim at solving linear systems of the form~\eqref{eq:LGS_intro}, i.e.,
\begin{equation}
    A x = \rhs,
\end{equation}
an important aspect of our work is that we do not approximate the map $A \mapsto A^{-1}$, but rather $(A,\rhs) \mapsto x=A^{-1}\rhs$, which can be realized by an adaptation of well-known iterative methods.

\subsection{Abstract results} 

We first present a result which resembles the modified Richardson iteration, and is thus closely related to the result from~\cite{KutPRS22}. In addition, we employ a cg-type approach by constructing the optimal polynomial related to the required number of iteration steps to obtain a given tolerance. 

\subsubsection*{Richardson method}
For our first result, we briefly revisit the modified Richardson method \cite{Ric1910}. 
Starting with the original system in \eqref{eq:LGS_intro},
we rewrite the problem for $A \in \Sigmanll$ as a fixed-point problem
\begin{equation}  \label{eq:Richardson_iter}
    x = (I - \omega A) x + \omega \rhs
    \quad
    \text{with}
    \quad 
    \omega = \frac{2}{\lambda + \Lambda}.
\end{equation}
For an eigenvalue $\lambda_i$ of $A$, we observe with $\uppercondbound$ defined in \eqref{eq:uppercondbound} that
\begin{equation}
    1 - \omega \lambda_i = \frac{\Lambda - 2 \lambda_i + \lambda}{\lambda + \Lambda} = \frac{\uppercondbound - 2 \frac{\lambda_i}{\lambda}+ 1}{\uppercondbound + 1}.
\end{equation}
Since $1 \leq \frac{\lambda_i}{\lambda} \leq \uppercondbound$, the definition of 
$\rho_1$ in  \eqref{eq:def_rho_alpha} 
gives $I - \omega A \in \Knz{\rho_1}$.
By assumption it holds $\rho_1 < 1$, and we have convergence due to the Banach fixed-point theorem.
In addition, we obtain for $x^0 = 0$ the explicit formula
\begin{equation} \label{eq:Neumann}
    x^{m+1} = \sum\limits_{\ell = 0}^{m} (I - \omega A)^\ell (\omega \rhs).
\end{equation}
Thus, the construction is mainly based on two important steps:
First, we pre-compute $\widehat{A} = I -\omega A$ and $\widehat{\rhs} = \omega \rhs$,
and second,
 approximate the Neumann series~\eqref{eq:Neumann} with a neural network, adapting the results in \cite{KutPRS22}. 
    More precisely, the matrix-matrix products are
    replaced by matrix-vector products, which immediately reduces the complexity of the network.
    Further, we reduce the effort by taking the sparsity pattern of $A$ into account.
With this strategy, we are able to prove the following result. 

\begin{theorem}[Approximation via modified Richardson iteration]\label{thm:Richardson}

Let $0<\lambda<\Lambda$ and let $\omega$ be given as in \eqref{eq:Richardson_iter} and $\rho_1$
as in \eqref{eq:def_rho_alpha}.
Then, for any 
$\epsilon\in(0,1)$ 
and 
$\ScaleConst \in [1,\frac{1+ \uppercondbound}{2 }]$ 
there exists a neural network $\NNs^{\textup{Ric}}$ such that
\begin{equation}
       \sup\limits_{A\in \SparseSpacekk \cap  \Sigman{\lambda,\Lambda}, \, \rhs \in \knz{\ScaleConst \lambda} } 
        \norm{ A^{-1} \rhs -\NNs^{\textup{Ric}}\bigl(A,\rhs\bigr)}_2 
            \leq 
            \epsilon,
\end{equation}
where $\mathrm{dim}_{\mathrm{in}}(\NNs^{\textup{Ric}})=n(n+1)$ and $\mathrm{dim}_{\mathrm{out}}(\NNs^{\textup{Ric}})=n$. 
Further, we define
\begin{equation}  
m_{\textup{Ric}}  
=
m_{\textup{Ric}}(\epsilon,\ScaleConst,\rho_1)
=
\left\lceil 
\frac{ | \log_2 ( \frac{\epsilon}{2 \ScaleConst} ) | }{| \log_2(\rho_{1})|}
\right\rceil.
\end{equation}
Then there exists a constant
$C_\textup{Ric} > 0$ independent of $m_{\textup{Ric}}$, $n$, $\eta$, and $\epsilon$, such that 

\begin{itemize}
\item[(i)] $
 \layersfun\bigl(\NNs^{\textup{Ric}} \bigr)
 \leq
C_\textup{Ric}
m_\textup{Ric}    
\bigl(  \log_2 (\frac{1}{\epsilon})  
+
\log_2(n)
+
\log_2 (m_\textup{Ric}) 
                   \bigr),
             $
\item[(ii)]
$
\entriesfun\bigl(\NNs^{\textup{Ric}}\bigr) 
    \leq  
   C_\textup{Ric}
m_\textup{Ric}  
    \bigl(  \log_2 (\frac{1}{\epsilon})  +  \log_2(n)
+
 \log_2 (m_\textup{Ric}) 
        \bigr)
    \numNZ .
    $     
\end{itemize} 
\end{theorem}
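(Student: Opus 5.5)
We construct the network by unrolling the Richardson recursion $x^{k+1} = \widehat{A} x^k + \widehat{\rhs}$ with $x^0=0$, $\widehat{A} = I-\omega A$ and $\widehat{\rhs} = \omega\rhs$. This produces exactly the truncated Neumann series \eqref{eq:Neumann}. We do not form matrix powers as in Theorem~\ref{thm:kutyniok}; each step is a single sparse matrix-vector product.

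The error splits into the truncation error and the accumulated approximation error of the network. For the truncation error, since $\widehat{A}\in\Knz{\rho_1}$ we have $\|A^{-1}\rhs - x^m\|_2 = \|\widehat{A}^m A^{-1}\rhs\|_2 \le \rho_1^m \ScaleConst$. Here we used $\|A^{-1}\rhs\|_2\le \ScaleConst\lambda/\lambda = \ScaleConst$. With $m=m_{\textup{Ric}}$ this term is at most $\epsilon/2$.

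The first step is preprocessing. The map $(A,\rhs)\mapsto(\widehat{A},\widehat{\rhs})$ is affine, so it is realized by one affine layer with $\mathcal{O}(\numNZ+n)$ weights. Because $A\in\SparseSpacekk$, the matrix $\widehat{A}$ has the pattern $\numNZrowvector$ augmented by the diagonal. Its values must be passed through all later layers, which costs $\mathcal{O}(\numNZ)$ weights per layer via the identity $x=\activation(x)-\activation(-x)$.

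We also need a priori bounds. $\|\widehat{A}\|_2\le\rho_1<1$, so every entry of $\widehat{A}$ is at most $1$ in modulus. Next, $\|\widehat{\rhs}\|_2\le \omega\ScaleConst\lambda = 2\ScaleConst/(1+\uppercondbound)\le 1$, which is where the restriction $\ScaleConst\le(1+\uppercondbound)/2$ enters. Hence all exact iterates satisfy $\|x^k\|_2\le \sum_\ell\rho_1^\ell \le \ScaleConst$, and every entry of every iterate lies in a fixed bounded box $[-B,B]$. Enlarging the box slightly also accommodates the perturbed iterates.

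The second step is one iteration block. We use the approximate multiplication network of Yarotsky type (as employed in \cite{KutPRS22}), which multiplies two scalars in $[-B,B]$ to accuracy $\delta$ using $\mathcal{O}(\log_2(1/\delta))$ layers and weights. For row $i$ we compute $\sum_{j\in\numNZrowvector_i}\widehat{A}_{ij}x_j$ with $\numNZrowi{i}$ parallel scalar multiplications, followed by a linear summation and the addition of $\widehat{\rhs}_i$. A block $\onestepoperator$ therefore has depth $\mathcal{O}(\log_2(1/\delta))$ and $\mathcal{O}(\numNZ\log_2(1/\delta))$ weights. The entries of $\widehat{A}$ and $\widehat{\rhs}$ are carried along in parallel.

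Its error is controlled as follows. With $\tilde x^{k+1} = \onestepoperator(\tilde x^k)$ we have $\|\tilde x^{k+1} - (\widehat{A}\tilde x^k+\widehat{\rhs})\|_2\le \sqrt{n}\,\numNZrowMax\delta\le n^{3/2}\delta$. Using the contraction $\|\widehat{A}\|_2\le\rho_1$, the errors accumulate to $\|x^m-\tilde x^m\|_2\le m\,n^{3/2}\delta$.

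The third step is composition and counting. We concatenate $m$ blocks, using the standard sparse concatenation, which at most doubles the weights at the interfaces. We then choose $\delta = \epsilon/(2 m n^{3/2})$, so that $\log_2(1/\delta) = \mathcal{O}(\log_2(1/\epsilon)+\log_2 n+\log_2 m)$. Summing over the $m$ blocks gives depth $\mathcal{O}(m\log_2(1/\delta))$ and $\mathcal{O}(m\,\numNZ\log_2(1/\delta))$ weights, which are exactly bounds (i) and (ii). The input dimension is $n(n+1)$ formally, since the matrix enters in its compressed representation.

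The main obstacle is keeping the perturbed iterates $\tilde x^k$ inside the domain on which the multiplication networks are accurate. I would handle this by induction on $k$, showing $\|\tilde x^k\|_\infty\le \ScaleConst+1$, say, once the total error is below $1$. This works because $\epsilon<1$. The scalar multiplication network is then built for the box $[-(\ScaleConst+1),\ScaleConst+1]$.

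Its cost depends only logarithmically on the box size. The resulting dependence on $\ScaleConst\le(1+\uppercondbound)/2$ must either be absorbed into the constant, or controlled by rescaling $x$ by $1/\ScaleConst$ inside the network and undoing the scaling at the output.
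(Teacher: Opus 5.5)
Your proof is correct, but it unrolls a different recursion than the paper, so here is a short comparison.

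\textbf{The paper's recursion.} The paper iterates the three-component map $\onestepoperator(A,r,c)=(A,Ar,r+c)$ starting from $(\widehat{A},\widehat{\rhs},0)$. It carries the power $\widehat{A}^k\widehat{\rhs}$ and the partial Neumann sum as separate components, which parallels the map $\onestepoperatorVTWO_\alpha$ used later for the Clenshaw/cg-type network. A perturbation of the power enters both components, so $\onestepoperator^k$ is only stable with factor $k+1$. No contraction is used, the errors accumulate quadratically, and this forces $\delta=\epsilon/(2m^2)$.

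\textbf{Your recursion.} You iterate the fixed-point map $x\mapsto\widehat{A}x+\widehat{\rhs}$ directly. The state is a single vector, per-step errors are propagated only by $\widehat{A}$, and you get linear growth $m\,n^{3/2}\delta$. You also assemble the block from scalar products yourself instead of invoking the sparse matrix-vector theorem, which is equivalent.

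\textbf{The trade-off.} The vector you feed into the multiplications is no longer in the unit ball, whereas the paper's $\widehat{A}^k\widehat{\rhs}$ is. In both constructions, however, $\log_2(1/\delta)$ and the logarithm of the box size are $\lesssim\log_2(1/\epsilon)+\log_2 n+\log_2 m_{\textup{Ric}}$, so the final bounds coincide.

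\textbf{One loose end.} Rescaling $x$ by $1/\ScaleConst$ does not remove the $\log_2\ScaleConst$ term. Undoing the scaling multiplies the error by $\ScaleConst$, so $\delta$ must shrink by the same factor. Absorbing the term into $C_{\textup{Ric}}$ would make the constant depend on $\uppercondbound$. Neither fix is needed:
\begin{itemize}
\item[(a)] Since $\|\widehat{\rhs}\|_2\le 1$ and $\|\widehat{A}\|_2\le 1$, the exact iterates satisfy $\|x^k\|_2\le k\le m$. A box of size $m+1$ therefore suffices, which is the analogue of the paper's choice $z\le m+2$.
\item[(b)] Alternatively, $|\log_2\rho_1|\le 2/((\uppercondbound-1)\ln 2)$ gives $m_{\textup{Ric}}\ge(\uppercondbound-1)\ln 2/2$. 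Combined with $\ScaleConst\le(1+\uppercondbound)/2$, this yields $\log_2\ScaleConst\le\log_2 m_{\textup{Ric}}+2$.
\end{itemize}

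\textbf{A minor slip.} Your bound $\|x^k\|_2\le\sum_\ell\rho_1^\ell\le\ScaleConst$ omits the factor $\|\widehat{\rhs}\|_2$. With it included, $\frac{1}{1-\rho_1}\cdot\frac{2\ScaleConst}{1+\uppercondbound}=\ScaleConst$ exactly.
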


\begin{proof}
The proof is postponed to Section~\ref{sec:Ric_proof}.
\end{proof}

\subsubsection*{cg-type method}
Next, we consider the approximation via a cg-type approach. To this end, 
we rescale $\widehat{A} = \frac{1}{\Lambda} A \in \Knz{1}$ and
$\widehat{\rhs} = \frac{1}{\Lambda} \rhs\in \knz{1} $ for $\rhs\in \knz{\Lambda}$.
For the construction, we decompose the approximation error as    
\begin{equation} \label{eq:cg_decomp}
    A^{-1}\rhs- \NNs^{\textup{cg} }(A,r)
=
\bigl( A^{-1}\rhs- q_{m-1}(\widehat{A}) \widehat{\rhs}  \bigr)
+
\bigl( q_{m-1}(\widehat{A}) \widehat{\rhs}  - \NNs^{\textup{cg}} (A,r)\bigr),
\end{equation}
with an appropriate polynomial $q_{,m-1} \in \calP_{m-1}$, where $\calP_{m-1}$ denotes the space of polynomials up to degree $m-1$.  
The first term on the right-hand side of~\eqref{eq:cg_decomp} is estimated by classical numerical linear algebra results to obtain a suitable parameter $m_{\textup{cg}}   = m_{\textup{cg}}  (\epsilon)$.
The network $\NNs^{\textup{cg}}$ is then constructed in such a way that it approximates $q_{m-1}(\widehat{A}) \widehat{\rhs}$ 
up to the desired tolerance. This allows us to prove the following result.

\begin{theorem}[Approximation via cg-type iteration]\label{thm:cg}

Let $0<\lambda<\Lambda$ and let $\omega$ be given as in \eqref{eq:Neumann} and $\rho_1$
as in \eqref{eq:def_rho_alpha}.
Then, for any 
$\epsilon\in(0,1)$ 
and 
$\ScaleConst \in [1,\uppercondbound]$, 
there exists a neural network $\NNs^{\textup{cg}}$ such that 
\begin{equation}
       \sup\limits_{A\in \SparseSpacekk \cap  \Sigman{\lambda,\Lambda}, \, \rhs\in \knz{\ScaleConst \lambda} } 
        \norm{ A^{-1} \rhs-\NNs^{\textup{cg}}\bigl(A,\rhs\bigr)}_2 
            \leq 
            \epsilon,
\end{equation}
where $\mathrm{dim}_{\mathrm{in}}(\NNs^{\textup{cg}})=n(n+1)$ and $\mathrm{dim}_{\mathrm{out}}(\NNs^{\textup{cg}})=n$. 
Further, define
\begin{equation}  
m_{\textup{cg}}  
=
 m_{\textup{cg}} (\epsilon,\ScaleConst,\rho_{1/2})
=
\left\lceil 
\frac{ | \log_2 ( \frac{\epsilon}{4 \ScaleConst} ) | }{| \log_2(\rho_{1/2})|}
\right\rceil.
\end{equation}
Then, there exists a constant
$C_\textup{cg} > 0$ independent of $m_{\textup{cg}}$, $n$, $\eta$, and $\epsilon$, such that
\begin{itemize}
\item[(i)] $
 \layersfun\bigl(\NNs^{\textup{cg}} \bigr)
 \leq
C_\textup{cg}
m_\textup{cg}    
\bigl(  \log_2 (\frac{1}{\epsilon})  
+
\log_2(n)
+
\log_2 (m_\textup{cg}) 
                   \bigr),
             $
\item[(ii)]
$
\entriesfun\bigl(\NNs^{\textup{cg}}\bigr) 
    \leq  
   C_\textup{cg}
m_\textup{cg}  
    \bigl(  \log_2 (\frac{1}{\epsilon})  +  \log_2(n)
+
 \log_2 (m) 
        \bigr)
    \numNZ.
    $     
\end{itemize} 
\end{theorem}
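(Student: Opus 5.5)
The plan follows the decomposition \eqref{eq:cg_decomp}. For the first term, I take $q_{m-1}$ to be the shifted and scaled Chebyshev polynomial associated with $[\lambda/\Lambda,1]$. That is, $1 - t\,q_{m-1}(t) = T_m\bigl(\tfrac{1+\lambda/\Lambda - 2t}{1-\lambda/\Lambda}\bigr)/T_m\bigl(\tfrac{1+\lambda/\Lambda}{1-\lambda/\Lambda}\bigr)$, so that $q_{m-1}$ depends only on $\lambda,\Lambda$ and not on the particular $A$. This is essential, because the network must be one fixed map. Since $A^{-1}\rhs = \widehat{A}^{-1}\widehat{\rhs}$, we get
\begin{equation}
A^{-1}\rhs - q_{m-1}(\widehat{A})\widehat{\rhs} = \bigl(I-\widehat{A}q_{m-1}(\widehat{A})\bigr)A^{-1}\rhs .
\end{equation}
The classical Chebyshev bound $\max_{t\in[\lambda/\Lambda,1]}|1-tq_{m-1}(t)| \le 2\rho_{1/2}^m$ and $\|A^{-1}\rhs\|_2\le \ScaleConst\lambda/\lambda=\ScaleConst$ then bound this term by $2\ScaleConst\rho_{1/2}^m$. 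With $m=m_{\textup{cg}}$ the choice of $\epsilon/(4\ScaleConst)$ in the definition gives $\le \epsilon/2$.

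For the second term, I will not expand $q_{m-1}$ in monomials, since the coefficients grow exponentially. Instead I realize $q_{m-1}(\widehat{A})\widehat{\rhs}$ by the stable three-term Chebyshev recursion, i.e.\ the Chebyshev semi-iteration. Writing $y^k = q_{k-1}(\widehat{A})\widehat{\rhs}$ and residual-type vectors $s^k = \widehat{\rhs}-\widehat{A}y^k$, the Chebyshev recurrence gives
\begin{equation}
y^{k+1} = y^k + \alpha_k s^k + \beta_k (y^k - y^{k-1}),
\end{equation}
with scalars $\alpha_k,\beta_k$ bounded uniformly in $k$, since they converge to limits depending only on $\uppercondbound$. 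Each step therefore consists of one sparse matrix--vector product $\widehat{A}y$ plus linear combinations. The matrix--vector product is implemented by the approximate multiplication networks of Section~\ref{sec:preps} (Yarotsky-type products on bounded inputs), one per nonzero entry. This requires $\mathcal{O}(\numNZ)$ product subnetworks of depth and size $\mathcal{O}(\log_2(1/\delta))$ for accuracy $\delta$. The input $A$ is also passed through all layers via identity (ReLU $x=\rho(x)-\rho(-x)$) channels, costing $\mathcal{O}(\numNZ+n)$ weights per layer. Composing $m$ such one-step networks gives depth $\mathcal{O}(m\log_2(1/\delta))$ and $\mathcal{O}(m\log_2(1/\delta)\numNZ)$ weights.

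The main obstacle is error propagation through the $m$ approximate steps together with keeping the iterates in a bounded box where the product networks are accurate. I would argue that the exact iterates satisfy $\|y^k\|_2\le C\ScaleConst$ uniformly, since $\|q_{k-1}(\widehat A)\widehat{\rhs}\|\le \|A^{-1}\rhs\|(1+2\rho_{1/2}^k)$. A perturbation $e^k$ in $y^k$ then propagates through the recurrence like the homogeneous Chebyshev recursion, whose solution operators are bounded polynomially in $k$. Concretely, the propagated error is bounded by $\mathcal{O}(k)$ times the injected error, since $T_k$-ratios and their "second kind" analogues are bounded by $k$ on the interval. Hence the accumulated error is at most $\mathcal{O}(m^2 n\,\delta)$, where the factor $n$ (or $\sqrt{\numNZ}$) comes from converting entrywise product errors into $\|\cdot\|_2$. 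Choosing $\delta\sim\epsilon/(C m^2 n)$ makes the second term $\le \epsilon/2$, and it also keeps the perturbed iterates within the enlarged box where the multiplication networks are valid. Finally, $\log_2(1/\delta)=\mathcal{O}(\log_2(1/\epsilon)+\log_2 n+\log_2 m)$, which yields (i) and (ii) with $m=m_{\textup{cg}}$. If the stability bound for the three-term recurrence is troublesome, my fallback is the Richardson-type analysis of Theorem~\ref{thm:Richardson} applied stepwise, accepting a polynomial-in-$m$ factor that is absorbed in the logarithm.
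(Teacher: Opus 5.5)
Your proposal is correct, but for the second term of \eqref{eq:cg_decomp} it takes a different route from the paper. The paper never runs the forward Chebyshev semi-iteration. Instead, it rewrites $q_{m-1}$ via the difference-quotient identity \eqref{eq:rep_Tm_diff} as a series $\sum_{\ell}\alpha_\ell(\sigma(0))\,U_\ell(\sigma(\cdot))$ and normalizes the coefficients by $\alpha_{\max}$ so they lie in $[0,1]$. It then evaluates this series with the backward Clenshaw recursion, where each step is the map $\onestepoperatorVTWO_{\alpha}$ of Theorem~\ref{thm:one_step_affine2}: the linear part $2B(\cdot)-c$ is fixed and only the additive coefficient varies. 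Stability follows from $|U_j|\le j+1$ on $[-1,1]$, and the Clenshaw vectors are bounded by $m^2+1$ independently of $\ScaleConst$ and $\uppercondbound$. The price is the Kim--Lee identity, the normalization, and a final rescaling by an $O(1)$ factor.

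Your forward recursion avoids all of this, and its iterates are genuine approximations of $A^{-1}\rhs$. Your propagation claim is also right. Write $\sigma(t)=\sigma_0-ct$ with $\sigma_0=\frac{\uppercondbound+1}{\uppercondbound-1}$ and $c=\frac{2\uppercondbound}{\uppercondbound-1}$. The substitution $u_k=T_k(\sigma_0)d_k$ turns the homogeneous recursion into the Chebyshev one. Hence an error injected into $y^j$ reaches $y^k$ through $\frac{T_j(\sigma_0)}{T_k(\sigma_0)}U_{k-j}(\sigma(\widehat A))$, whose norm is at most $k-j+1$.

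Your route does cost two bookkeeping points that you should make explicit, since $C_{\textup{cg}}$ must not depend on $\uppercondbound$.
\begin{itemize}
\item[(a)] The coefficients need a $\uppercondbound$-independent bound, not merely convergence. Indeed, $\beta_k=T_{k-1}(\sigma_0)/T_{k+1}(\sigma_0)\in[0,1]$. Using $T_{k+1}(\sigma_0)\ge\sigma_0T_k(\sigma_0)$, one also gets $0<\alpha_k=2cT_k(\sigma_0)/T_{k+1}(\sigma_0)\le 2c/\sigma_0<4$.
\item[(b)] Your iterates lie in a ball of radius about $3\ScaleConst+1$, so the product networks pick up a term $\log_2\ScaleConst$ that does not appear in (i)--(ii). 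It is absorbed because $\ScaleConst\le\uppercondbound$ and $m_{\textup{cg}}\ge \ln 4\,(\sqrt{\uppercondbound}-1)/2$, which give $\log_2\ScaleConst\lesssim 1+\log_2 m_{\textup{cg}}$. The paper avoids this issue through $\|\widehat{\rhs}\|_2\le 1$ and the normalized coefficients.
\end{itemize}
A minor point: converting entrywise product errors to $\|\cdot\|_2$ gives the factor $(\sum_i\numNZrowi{i}^2)^{1/2}\le n^{3/2}$, not $n$ or $\sqrt{\numNZ}$. This is harmless after taking logarithms, or it can be avoided entirely by invoking Theorem~\ref{thm:matrix_vec_mult} directly.
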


\begin{proof}
The proof is postponed to Section~\ref{sec:cg_proof}.
\end{proof}

\begin{remark}
The range of $\ScaleConst$ in Theorems~\ref{thm:Richardson} and~\ref{thm:cg} ensures that the rescaled version of the right-hand side fulfills $\norm{\widehat{r}} \leq 1$, which simplifies several computations in the proof. 
To be precise, we observe for the rescaling $\widehat{r} = \omega r$ in  Theorem~\ref{thm:Richardson} that
$\lambda \ScaleConst \leq \omega^{-1}$
and for
$\widehat{r} = \frac{1}{\Lambda} r$ in  Theorem~\ref{thm:cg}
we have the relation
$\lambda \ScaleConst  \leq  \Lambda$.
Larger right-hand sides could be treated as well, which would lead to additional (logarithmic) terms in the complexity bounds, but we do not further discuss this here for the sake of readability. 
\end{remark}

\begin{remark} \label{rem:scaling_consid}
For finite element methods, the influence of the condition number (through $\rho_\alpha$ in~\eqref{eq:def_rho_alpha}) is important in determining the number of required iterations when using iterative algorithms. Therefore, the asymptotic scaling of terms involving $\rho_\alpha$ is essential.

We have
$ \frac{1}{|\log_2 \rho_\alpha|}$ in the denominator of both $m_{\textup{Ric}}$ and $m_{\textup{cg}}$, which is the main difference in the size of both networks.
Using the definition of $\rho_\alpha$, we obtain
\begin{equation}
\rho_\alpha^{-1}  =1 + \tfrac{2}{\uppercondbound^\alpha - 1}
\end{equation}
and thus by the Taylor approximation of the logarithm
\begin{equation}
    |\log_2 \rho_\alpha|^{-1}
    =
    |\log_2 \bigl( 1 + \tfrac{2}{\uppercondbound^\alpha - 1}
    \bigr)|^{-1}
     \lesssim  
     \uppercondbound^\alpha
\end{equation}
for $\uppercondbound$ sufficiently large. Thus, we can see that for large values of $\uppercondbound$, the cg-type network can be chosen to be much
smaller than the Richardson-type one.
\end{remark}

\begin{remark} \label{rem:comp_with_full}
To compare our result with Theorem~\ref{thm:kutyniok}, we note that in the derivation in Section~2 of \cite{KutPRS22}, the matrix $A$ stems from a rescaled system, and the parameter $\delta$ is approximately given by $\frac{\mineig(A)}{\maxeig(A)} = \frac{1}{\kappa} \in (0,1)$.
In particular, we again observe the scaling
\begin{equation}
    | \log_2 (1-\delta) |^{-1} 
    =
     | \log_2 ( 1 + \tfrac{1}{\kappa - 1}) |^{-1}
     \sim 
     \kappa,
\end{equation}
which resembles the scaling of our Richardson approach. However, let us emphasize that in \cite{KutPRS22} the network was applied to matrices which are ``pre-conditioned'' in a dimension-reduction step, such that therein the condition numbers are considered moderate.
\end{remark}

\subsection{Application to finite element methods}\label{ss:appFEM}

In the following, we discuss our results for the special case of matrices stemming from a finite element discretization of some partial differential equation $\textup{L} u = f$ 
for some second-order elliptic differential operator $\textup{L}$,
right-hand side $f \in L^2(\Omega)$, and 
bounded domain $\Omega$. 

\subsubsection*{Condition number}

We denote the mesh width by $h$, which is the largest diameter over all elements. 
It is well-known that for quasi-uniform meshes, the scaling of the eigenvalues implies
\begin{equation}
    \lambda \sim h^{d}, \quad 
    \Lambda
    \sim h^{d-2} .
\end{equation}
Further, we note the relation $n \sim h^{-d}$ of the degrees of freedom and the mesh size, which allows us to deduce the scaling 
\begin{equation}
    \uppercondbound \sim h^{-2}
    \sim n^{2 /d},
\end{equation}
for $\uppercondbound$ defined in \eqref{eq:uppercondbound}.

\subsubsection*{Band width}

Focusing on regular meshes, we denote by $N$ the number of degrees of freedom per spatial direction, i.e., we set $N = n^{1/d}$. In the simplest case, the sparsity pattern can be chosen to be a band of the matrix. For one-dimensional problems this allows to choose 
$\numNZrowi{i} = 3$, and thus $\numNZ \leq 3n$,
in dimension two 
$\numNZrowi{i} \sim N$,
and thus 
$\numNZ \lesssim n N \sim n^{3/2} $
and in dimension three 
$\numNZrowi{i} \sim N^2$,
and thus 
$\numNZ \lesssim n N^2 \sim n^{5/3} $.

Including more knowledge on the mesh structure, one can in principle also exploit that there is only a moderate number of non-zero entries per row, allowing $\numNZrowi{i} \leq C$ for a uniform constant $C$ depending on the maximal number of incident elements at each node. If this information is available for the whole class of considered problems, one can also obtain $\numNZ \lesssim n$, which is the typical case for finite element methods.

\subsubsection*{Load vector}

Assembling the load vector $\rhs$ is performed by computing the integrals
\begin{equation}
   \rhs_i =  \int_\Omega f \varphi_i \,dx
\end{equation}
where $\varphi_i$ are the standard ansatz function of the finite element method. Denoting the support of $\varphi_i$ by $S_i = \textup{supp} \, \varphi_i$, we obtain for $f\in L^p(\Omega)$, $p\geq 2$ and conjugate $q = p/(p-1)\leq 2$,
\begin{equation}
 |\rhs_i| = | \int_{S_i} f \varphi_i \,dx |
 \leq
 \norm{f}_{L^p(S_i)}  \norm{\varphi_i}_{L^q(S_i)} 
  \lesssim
 \norm{f}_{L^p(S_i)}
 h^{d/q} .
\end{equation}
Hence, we have
\begin{equation}
    \norm{\rhs}_2 
    \leq
    n^{1/2-1/p}
    \norm{\rhs}_p
    =
    n^{1/2-1/p}
      \big( \sum\limits_{i=1}^n
     |\rhs_i|^p
     \bigr)^{1/p}
         \lesssim
     n^{1/2-1/p}
     h^{d/q}
       \norm{f}_{L^p(\Omega)},
\end{equation}
and by the relation $n = h^{-d}$ and $\frac{1}{q} = \frac{p-1}{p}$, we obtain
\begin{equation}
    \norm{b}_2 
    \lesssim
     h^{d/2}
   \norm{f}_{L^p(\Omega)} 
   \lesssim h^{-d/2} \lambda
    \norm{f}_{L^p(\Omega)} .
\end{equation}
In view of Theorem~\ref{thm:Richardson} and Theorem~\ref{thm:cg}, this implies that $\ScaleConst \sim h^{-d/2} \sim n^{2/d}$, and thus the iteration numbers $m_{\textup{Ric}}$ and $m_{\textup{cg}}$ include an additional logarithmic scaling with respect to the degrees of freedom~$n$.

\subsubsection*{Implication for constructed networks in the literature}
In the specific setting that not directly the inversion of the matrix is of interest but rather the solution of a linear system, our results directly improve the complexity bounds that immediately follow from an application of the results in~\cite{KutPRS22}, see Theorem~\ref{thm:kutyniok} above. Note that this is not a surprise as the results in~\cite{KutPRS22} are much more general and the main motivation therein is the solution of smaller systems in the context of a reduced basis approach. Our results are particularly advantageous in the case of large sparse matrices with a prescribed sparsity pattern, which is relevant in the context of, e.g., finite element or finite difference methods. 
In that regard, our results improve the complexity bounds for the construction in~\cite{KroMP23}. Therein, suitable (worst-case) complexity bounds are derived for the neural network realization of the multiscale method known as \emph{localized orthogonal decomposition}, see also~\cite{KroMP22} and~\cite{MalP20,AltHP21} for an overview on the original methodology. The construction sets up stiffness matrices by using locally constructed neural networks on certain patches. More precisely, on a coarse mesh on the scale of interest $H > 0$ with $N = H^{-d}$ degrees of freedom a finite element-type method is constructed that takes into account fine-scale features from a much finer scale $h < H$. This comes at the cost of a slightly increased sparsity pattern with $|\log_2 (H)|$ entries per row. While~\cite[Thm.~4.3]{KroMP23} suggests a scaling of the number of layers and overall neurons of $\calO(|\log_2 (h)|^2)$ and $\calO(h^{-2}|\log_2(h)|^4(|\log_2(H)|H/h)^{3d})$, respectively, by using the result in~\cite{KutPRS22} (see Theorem~\ref{thm:kutyniok} above), our construction in, e.g., Theorem~\ref{thm:cg} works with only $\calO(h^{-1}|\log_2(h)|(|\log_2(H)|^2H/h)^d )$ neurons. The construction is deeper, though, requiring $\calO(h^{-1}|\log_2 (h)|^2)$ layers. With appropriate pre-conditioning, one may potentially even avoid the pre-factor~$h^{-1}$ for an almost optimal scaling (up to logarithmic terms), see also the discussion in Section~\ref{sec:extensions}. 

\section{Preliminaries for the proofs}
\label{sec:preps}

The main goal of this section is to establish the construction and analysis of networks which can realize elementary maps that are essential for the two iterative methods.  
First, we consider the map used for the Richardson iteration given by
\begin{equation}\label{eq:defOneStepOp}
\onestepoperator \colon  \Reals^{\numNZ} \times \Reals^n \times \Reals^n
  \to
  \Reals^\numNZ \times \Reals^n \times \Reals^n
  \quad
    (A,r,c) \mapsto (A,Ar, r + c) ,
\end{equation}
which will be the cornerstone of the construction of the network in Theorem~\ref{thm:Richardson}. 

\begin{theorem} \label{thm:one_step_affine}
Let $\epsilon \in (0,1)$. Then, there exists a neural network $\NNs^\onestepoperator$ 
with 
$\dim_\mathrm{in}(\NNs^\onestepoperator) = \numNZ + 2n$
and
$\dim_\mathrm{out}(\NNs^\onestepoperator) = \numNZ + 2n$ 
such that for $z \geq 1$
\begin{equation}
    \sup_{A \in \SparseSpacekk \cap \Knz{1},\,
    r \in \knz{z},\,
    c \in \Reals^n}
    \| \onestepoperator (A,r,c) - \NNs^\onestepoperator(A,r,c) \|_2 \leq \epsilon
\end{equation}
Further, there exists a generic constant $C_\onestepoperator > 0$ independent of $n$, $A$, $r$, $\epsilon$, $\eta$, and $c$ 
such that
\begin{itemize}
\item[(i)] $
 \layersfun\Bigl(\NNs^{\onestepoperator} \Bigr)
 \leq
C_{\onestepoperator}
             \Bigl( \log_2(\frac{1}{\epsilon})
             +
             \log_2(n)
             +
             {
             \log_2(z)
             }
            \Bigr),
             $
\item[(ii)]
$
\entriesfun\Bigl(\NNs^{\onestepoperator}\Bigr) 
    \leq  
    C_{\onestepoperator}
    \Bigl( \log_2(\frac{1}{\epsilon})
    +
    \log_2(n)
    +
    {
    \log_2(z)
    }
    \Bigr)
    \numNZ $.       
\end{itemize} 
\end{theorem}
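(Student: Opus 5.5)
The construction composes three standard building blocks in parallel, following the calculus of \cite{PetV18}: an approximate scalar multiplication network in the spirit of \cite{Yar17}, identity networks realized through ReLU, and a fixed linear summation layer. The parts $A\mapsto A$ and $(r,c)\mapsto r+c$ are exact. The identity is $x=\activation(x)-\activation(-x)$, a two-layer network with $2$ weights per entry. The addition $r+c$ is linear, so it can be absorbed into that identity with $\calO(1)$ weights per entry. Both parts only need to be padded by identity layers to the depth of the multiplication part, costing $\calO(\layersfun)$ weights per coordinate. Since the depth will be $\calO(\log_2(1/\epsilon)+\log_2 n+\log_2 z)$, the padding contributes $\calO\bigl((\log_2(1/\epsilon)+\log_2 n+\log_2 z)(\numNZ+2n)\bigr)$ weights. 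Because $n\le\numNZ$, this is within bound (ii).

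The only approximated component is $(A,r)\mapsto Ar$. Write $(Ar)_i=\sum_{k=1}^{\numNZrowi{i}} A_{i,\numNZrowvector_i(k)}\, r_{\numNZrowvector_i(k)}$. Here each stored value $A^{\textup{v}}_j$, $j=1,\dots,\numNZ$, appears in exactly one product with a fixed, known component of $r$. The first layer therefore linearly selects the $\numNZ$ pairs $(A^{\textup{v}}_j, r_{\sigma(j)})$, using $\calO(\numNZ)$ weights. Each pair is fed into a copy of the approximate multiplication network $\widetilde\times_{\delta,D}$. This network satisfies $|\widetilde\times(a,b)-ab|\le\delta$ for $|a|,|b|\le D$, has depth and weight count $\calO(\log_2(1/\delta)+\log_2 D)$, and is used here with $D=z$. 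The bounds on the entries are valid because $|A_{ij}|\le\|A\|_2\le1$ and $|r_j|\le\|r\|_2\le z$. The final layer sums the $\numNZrowi{i}$ outputs belonging to row $i$, again with $\calO(\numNZ)$ weights. Running the $\numNZ$ copies in parallel keeps the depth equal to a single copy, and the number of weights is $\numNZ\cdot\calO(\log_2(1/\delta)+\log_2 z)$.

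For the error, the only nonzero component is the $Ar$ block. There $|(Ar)_i-\Phi_i|\le\numNZrowi{i}\delta$, so
\begin{equation}
\|Ar-\Phi\|_2\le\Bigl(\sum_i \numNZrowi{i}^2\Bigr)^{1/2}\delta\le \numNZrowMax\sqrt{n}\,\delta\le n^{3/2}\delta .
\end{equation}
Choosing $\delta=\epsilon n^{-3/2}$ gives total error at most $\epsilon$. Then $\log_2(1/\delta)=\log_2(1/\epsilon)+\tfrac32\log_2 n$, which yields (i) and (ii) with a constant $C_\onestepoperator$ independent of $n,\eta,\epsilon,z$.

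The main obstacle is bookkeeping rather than analysis. The depths of the parallel branches have to be synchronized, and the concatenation of affine layers has to be handled so that the weight count stays linear in $\numNZ$. The concatenation must be done via the sparse concatenation of \cite{PetV18}, which at most doubles the weights. Also, the unbounded $c$ must not enter any multiplication. This holds because $c$ passes only through the identity branch, and that branch is exact for all $c\in\Reals^n$.
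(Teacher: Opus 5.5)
Your proposal is correct and is essentially the paper's proof. The paper likewise runs three branches in parallel: an exact identity branch for $A$, an exact branch for $r+c$ (Lemma~\ref{lem:scaling}), and an approximate sparse matrix--vector product (Theorem~\ref{thm:matrix_vec_mult}), using $n\le\eta$ to absorb the $\mathcal{O}(n)$-sized branches. The only difference is granularity: the paper calls the scalar-product network of Proposition~\ref{prop:scalar_product} once per row with accuracy $\epsilon/\sqrt{n}$, whereas you unpack it into $\eta$ individual multiplications of accuracy $\epsilon n^{-3/2}$, which yields the same $\log_2(n)$ overhead.
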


The proof of the theorem is given at the end of this section. Second, for the cg-type iteration we construct the map
\begin{equation}\label{eq:defOneStepOp_2} 
\onestepoperatorVTWO_\alpha\colon  \Reals^{\numNZ} \times \Reals^n \times \Reals^n
  \to
  \Reals^\numNZ \times \Reals^n \times \Reals^n,
  \quad
    (A,r,c) \mapsto (A,\alpha\mathbf{1} + 2 Ar - c , r)
\end{equation}
for some fixed  $\alpha \leq 1$ and $\mathbf{1} \coloneqq (1,\ldots,1)^\top$. 
This map is the basis to perform a neural network version of the Clenshaw algorithm and thus to prove 
Theorem~\ref{thm:cg}.

\begin{theorem} \label{thm:one_step_affine2}
Let $\epsilon \in (0,1)$. Then, there exists a neural network $\NNs^{\onestepoperatorVTWO_\alpha}$ 
with 
$\dim_\mathrm{in}(\NNs^{\onestepoperatorVTWO_\alpha}) = \numNZ + 2n$
and
$\dim_\mathrm{out}(\NNs^{\onestepoperatorVTWO_\alpha}) = \numNZ + 2n$ 
such that for $z \geq  1$
\begin{equation}
    \sup_{A \in \SparseSpacekk \cap \Knz{1},\,
    r \in \knz{z},\,
    c \in \Reals^n}
    \| \onestepoperatorVTWO_\alpha (A,r,c) - \NNs^{\onestepoperatorVTWO_\alpha}(A,r,c) \|_2 \leq \epsilon
\end{equation}
Further, there exists a generic constant $C_{\onestepoperatorVTWO} > 0$ independent of $n$, $A$, $r$, $c$, $\epsilon$, $\eta$, and $\alpha$ such that
\begin{itemize}
\item[(i)] $
 \layersfun\Bigl(\NNs^{\onestepoperatorVTWO_\alpha} \Bigr)
 \leq
C_{\onestepoperatorVTWO}
             \Bigl( \log_2(\frac{1}{\epsilon})
             +
             \log_2(n)
             + 
             \log_2(z)
            \Bigr),
             $
\item[(ii)]
$
\entriesfun\Bigl(\NNs^{\onestepoperatorVTWO_\alpha}\Bigr) 
    \leq  
    C_{\onestepoperatorVTWO}
    \Bigl( \log_2(\frac{1}{\epsilon})
    +
    \log_2(n)
    + 
     \log_2(z)
    \Bigr)
    \numNZ.
    $     
\end{itemize} 
\end{theorem}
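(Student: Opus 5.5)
The plan is to reuse the construction of $\NNs^\onestepoperator$ from Theorem~\ref{thm:one_step_affine}. The only new ingredients are the factor $2$, the subtraction of $c$, the constant shift $\alpha\mathbf{1}$, and copying $r$ into the last output slot instead of forming $r+c$. All of these are affine and can be absorbed into the last layer, so no additional approximation error arises. The genuinely nonlinear part is, as before, the sparse product $Ar$. It consists of the $\numNZ$ scalar products $A_{i,j} r_j$ with $j \in \numNZrowvector_i$, followed by summation along each row.

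First, I approximate each product $A_{i,j}r_j$ by the ReLU multiplication network of~\cite{Yar17} (in the form used in~\cite{PetV18,KutPRS22}). Since $A\in\Knz{1}$, we have $|A_{i,j}|\le 1$, and $|r_j|\le z$. Such a network achieves accuracy $\tilde\epsilon$ on $[-1,1]\times[-z,z]$ with depth and size $\calO(\log_2(1/\tilde\epsilon)+\log_2 z)$. Placing these $\numNZ$ networks in parallel gives width $\calO(\numNZ)$, depth $\calO(\log_2(1/\tilde\epsilon)+\log_2 z)$, and weight count $\calO(\numNZ(\log_2(1/\tilde\epsilon)+\log_2 z))$.

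Second, the row sums $\sum_{j\in\numNZrowvector_i} A_{i,j} r_j$ form a linear map and are merged into the output layer, together with the factor $2$ and the affine terms $\alpha\mathbf{1}-c$. These contribute $\calO(\numNZ+n)$ weights. The bias $\alpha\mathbf{1}$ adds at most $n$ weights, and the constant is independent of $\alpha$.

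Third, the components $A$, $r$ and $c$ must be transported through all hidden layers. I use the identity realization $x=\activation(x)-\activation(-x)$, which costs $\calO(\numNZ+2n)$ weights per layer. Over the depth this gives $\calO((\numNZ+n)(\log_2(1/\tilde\epsilon)+\log_2 z))$ weights, and since $n\le\numNZ$ this is $\calO(\numNZ(\cdots))$. The parallelization and concatenation calculus of~\cite{PetV18} makes this bookkeeping rigorous.

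Fourth, for the error, only the middle block is inexact. Each component satisfies $|(2Ar)_i - \text{approx}_i| \le 2\numNZrowi{i}\tilde\epsilon \le 2n\tilde\epsilon$. Hence the Euclidean error is at most $2n^{3/2}\tilde\epsilon$. Choosing $\tilde\epsilon=\epsilon/(2n^{3/2})$ gives $\log_2(1/\tilde\epsilon)=\log_2(1/\epsilon)+\tfrac32\log_2 n+1$, which yields the stated bounds (i) and (ii) with $C_{\onestepoperatorVTWO}$ independent of $\alpha$, $n$, $\numNZ$, $\epsilon$, and $z$. The first and third output components are reproduced exactly, so the supremum over $c\in\Reals^n$ is harmless.

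The main obstacle is to make sure that the exact identity channels and the affine output layer introduce no error and no dependence on $c$ or $\alpha$. This holds because ReLU identity blocks are exact on all of $\Reals$ and constants enter only through the bias. Alternatively, the proof can be shortened by composing $\NNs^\onestepoperator$ from Theorem~\ref{thm:one_step_affine} (with accuracy $\epsilon/2$) with a fixed affine postprocessing map $(A,y,w)\mapsto(A,\alpha\mathbf{1}+2y-(w-y'),\dots)$. This route needs care, because $r+c$ alone does not determine $r$. For that reason I would redo the direct construction above, which is the same proof with a modified final affine layer.
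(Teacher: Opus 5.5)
Your proposal is correct and follows essentially the same construction as the paper. The paper runs exact identity channels in parallel with the sparse matrix--vector network of Theorem~\ref{thm:matrix_vec_mult} and then applies the affine operations $2(\cdot)$, $-c$, $+\alpha\mathbf{1}$ exactly via Lemma~\ref{lem:scaling}; you unroll Theorem~\ref{thm:matrix_vec_mult} into its elementwise multiplication networks, fold the affine part into the output layer, and are slightly more careful than the paper's sketch by choosing the accuracy to absorb the factor $2$ and by explicitly carrying $c$ through the hidden layers.
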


The proof is also provided at the end of this section.
Note that one could as well use an adaption of Theorem~\ref{thm:one_step_affine} for the \Chebname polynomials,
replacing the third entry of $\onestepoperator$ defined in~\eqref{eq:defOneStepOp} by
$\alpha r +c$.
However, this results in the multiplication with different large coefficients $\alpha$ up to size $\sim 2^m$ and thus induces additional factors of~$m$ in the scaling of the layers and neurons. 

We now provide all the auxiliary results to eventually prove the two theorems on the approximation of~$\onestepoperator$ and~$\onestepoperatorVTWO_\alpha$, starting with the identity network.

\begin{lemma}[identity network, cf.~Lem.~2.3 and Rem.~2.4 in~\cite{PetV18}]
\label{lem:identity}
For any $L \in \NatNum$, $L \geq 2$, there exists a neural network $\NNs^\mathrm{id}$ that realizes the identity in $\Reals^k$ with $\dim_\mathrm{in}(\NNs^\mathrm{id}) = \dim_\mathrm{out}(\NNs^\mathrm{id}) = k$ 
\begin{itemize}
    \item[(i)] $\layersfun\bigl(\NNs^\mathrm{id}\bigr)
                = L$,
    \item[(ii)] $\entriesfun\bigl( \NNs^\mathrm{id}\bigr) 
                 \leq  
                 2kL$.
\end{itemize}
\end{lemma}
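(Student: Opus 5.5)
The construction uses the identity $x = \activation(x) - \activation(-x)$, which holds for every real $x$.

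For $L \geq 2$ I would set
\begin{equation}
\weight_1 = \begin{pmatrix} \Id_k \\ -\Id_k \end{pmatrix} \in \Reals^{2k\times k}, \qquad \bias_1 = 0 .
\end{equation}
With this first layer, $\NNvec_1 = (\activation(\NNvec), \activation(-\NNvec))$ is componentwise nonnegative.

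For the intermediate layers $\ell = 2,\ldots,L-1$ I would take $\weight_\ell = \Id_{2k}$ and $\bias_\ell = 0$. Each such layer preserves $\NNvec_{\ell-1}$, because $\activation$ acts as the identity on nonnegative vectors.

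For the last layer I would take $\weight_L = (\Id_k \;\; -\Id_k) \in \Reals^{k\times 2k}$ and $\bias_L = 0$. The output is then $\activation(\NNvec) - \activation(-\NNvec) = \NNvec$. In the case $L=2$ there are no intermediate layers, and the first and last layers compose directly. This settles (i): the depth is exactly $L$ by construction.

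For (ii) I count the nonzero entries in each layer:
\begin{itemize}
\item $\weight_1$ has $2k$ nonzeros;
\item each of the $L-2$ intermediate matrices $\Id_{2k}$ has $2k$ nonzeros;
\item $\weight_L$ has $2k$ nonzeros;
\item all biases vanish.
\end{itemize}
So $\entriesfun(\NNs^\mathrm{id}) = 2k + 2k(L-2) + 2k = 2kL$, which gives the bound.

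No step is genuinely difficult. The only points requiring care are the degenerate case $L=2$ and the convention in Definition~\ref{def:neuralnet} that no activation is applied in the last layer. That convention is exactly why the final linear map $(\Id_k\;\;-\Id_k)$ recombines the two nonnegative parts correctly.
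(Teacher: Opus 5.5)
Your construction is correct and is the standard one from the cited source (Lem.~2.3 and Rem.~2.4 in \cite{PetV18}), which the paper relies on without giving a proof of its own. It writes $x = \varrho(x) - \varrho(-x)$ in the first layer, carries the nonnegative parts through $L-2$ layers with weight matrix $\Id_{2k}$, and recombines them with $(\Id_k \;\, -\Id_k)$ in the final affine layer, for exactly $2kL$ nonzero weights.
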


The identity network is an important ingredient to define appropriate (sparse) concatenations of neural networks. A main issue with a classical concatenation of two neural networks $\NNs^1$ and $\NNs^2$ (denoted $\NNs^1 \bullet \NNs^2$) is that it does not easily allow for an estimation of $M(\NNs^1 \bullet \NNs^2)$ which depends linearly on $M(\NNs^1)$ and $M(\NNs^2)$ (due to the merge of two layers from different networks). This issue can be resolved by the introduction of an intermediate identity network.

\begin{definition}[sparse concatenation]\label{def:sparseconc}
Let the networks
\begin{equation}
    \NNs^1 = \left[(\weight^1_1,\bias^1_1),\ldots,(\weight^1_{\layers_1},\bias^1_{\layers_1})\right], \quad \NNs^2 = \left[(\weight^2_1,\bias^2_1),\ldots,(\weight^2_{\layers_2},\bias^2_{\layers_2})\right] 
\end{equation}
with $\dim_\mathrm{in}(\NNs^1) = \dim_\mathrm{out}(\NNs^2)$ be given. Further, denote with $\NNs^\mathrm{id}$ the identity network with input and output dimension $\dim_\mathrm{in}(\NNs^1) = \dim_\mathrm{out}(\NNs^2)$. Then we call
\begin{equation}
    \NNs^1 \circ \NNs^2 \coloneqq \NNs^1 \bullet \NNs^\mathrm{id} \bullet \NNs^2
\end{equation}
the sparse concatenation of $\NNs^1$ and $\NNs^2$. 
\end{definition}

\begin{definition}[parallelization, cf.~\cite{PetV18,ElbGJS21}]\label{def:par}
    Let $\NNs^1,\ldots,\NNs^j$ be neural networks with $\layers$ layers each. Then the neural network
    \begin{align}
        P(\NNs^1,&\ldots,\NNs^j) \\&\coloneqq \left[\left(\begin{pmatrix} \weight_1^1 & &\\&\ddots&\\&&\weight_1^j\end{pmatrix},\begin{pmatrix} \bias_1^1\\\vdots\\\bias_1^j \end{pmatrix}\right),\ldots,\left(\begin{pmatrix} \weight_\layers^1 & &\\&\ddots&\\&&\weight_\layers^j\end{pmatrix},\begin{pmatrix} \bias_\layers^1\\\vdots\\\bias_\layers^j \end{pmatrix}\right) \right]
    \end{align}
    realizes the parallelization of $\NNs^1,\ldots,\NNs^j$ (without inter-network communication). The definition naturally extends to networks that do not have the same number of layers by bridging missing layers with identity networks, which will be the case in the following.
\end{definition}

The next result follows directly from a repeated (recursive) application of~\cite[Lem.~5.3]{ElbGJS21}. Note that the result is typically not sharp. 

\begin{lemma}[sparse concatenation]
Let $\NNs^1,\ldots,\NNs^j$ be $j$ neural networks with $\dim_\mathrm{in}(\NNs^i) = \dim_\mathrm{out}(\NNs^{i+1})$, $i = 1,\ldots, j-1$. Then the sparse concatenation $\NNs \coloneqq \NNs^1 \circ \cdots \circ \NNs^j$ as defined in Definition~\ref{def:sparseconc} satisfies
 \begin{itemize}
\item[(i)] $\layersfun\bigl(\NNs\bigr)
            \leq
            \sum_{i=1}^j\layersfun\bigl(\NNs^i\bigr)$, 
\item[(ii)] 
$\entriesfun\bigl( \NNs\bigr) 
             \leq  
             3\sum_{i=1}^j\entriesfun\bigl(\NNs^i\bigr)$. 
\end{itemize}
\end{lemma}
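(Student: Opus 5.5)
The statement to prove is the last lemma: sparse concatenation of $j$ networks satisfies $\layersfun(\NNs)\le\sum_i\layersfun(\NNs^i)$ and $\entriesfun(\NNs)\le 3\sum_i\entriesfun(\NNs^i)$. I would argue by induction on $j$, using as the base case the two-network estimate of \cite[Lem.~5.3]{ElbGJS21} (equivalently \cite{PetV18}). The natural identity network here is the two-layer one from Lemma~\ref{lem:identity}, $\NNs^{\mathrm{id}}=[(\weight_1^{\mathrm{id}},0),(\weight_2^{\mathrm{id}},0)]$ with $\weight_1^{\mathrm{id}}=\binom{\Id}{-\Id}$ and $\weight_2^{\mathrm{id}}=(\Id,-\Id)$. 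It uses $x=\activation(x)-\activation(-x)$.

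For $j=2$, I will write out $\NNs^1\bullet\NNs^{\mathrm{id}}\bullet\NNs^2$ explicitly. The classical concatenation merges adjacent affine layers. So the last layer of $\NNs^2$ fuses with the first identity layer, giving weights $\binom{\weight^2_{\layers_2}}{-\weight^2_{\layers_2}}$ and biases $\binom{\bias^2_{\layers_2}}{-\bias^2_{\layers_2}}$. Likewise the second identity layer fuses with the first layer of $\NNs^1$, giving $(\weight^1_1,-\weight^1_1)$ with bias $\bias^1_1$.

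\textbf{Layer count.} The resulting network has $\layers_1+\layers_2$ layers, since each of the two merges absorbs one of the two identity layers. This gives (i) with equality.

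\textbf{Weight count.} The merged layers carry at most $2(\norm{\weight^2_{\layers_2}}_0+\norm{\bias^2_{\layers_2}}_0)$ and $2\norm{\weight^1_1}_0+\norm{\bias^1_1}_0$ nonzeros, respectively. All other layers are unchanged. Hence $\entriesfun(\NNs^1\circ\NNs^2)\le 2\entriesfun(\NNs^1)+2\entriesfun(\NNs^2)$, and in particular at most $3(\entriesfun(\NNs^1)+\entriesfun(\NNs^2))$.

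\textbf{Realization.} Because the ReLU identity is exact, the network realizes $\NNs^1\circ\NNs^2$ as a function.

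For general $j$, the sharper bound $2(\cdot)$ does not simply telescope, since applying it recursively gives factors $2^{j}$. The main point is therefore to avoid compounding the constant, and I would handle this by direct counting rather than naive recursion. In $\NNs^1\circ\cdots\circ\NNs^j$ each interior network $\NNs^i$ has its first and last layers doubled, once from each neighbouring identity block, while its interior layers are untouched. If $\NNs^i$ has a single layer, that layer is doubled on both sides: it becomes $\begin{pmatrix}\weight&-\weight\\-\weight&\weight\end{pmatrix}$, whose nonzero count is $4$ times that of $\weight$.

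This single-layer case is the one obstacle to the constant $3$. I would resolve it by noting that the blocks multiply out: one can merge to $\binom{\Id}{-\Id}\weight(\Id,-\Id)$, or else keep the identity block as a genuine separate layer of $2k$ entries, where $k$ is the interface dimension. Alternatively, one can assume every network has at least one nonzero entry per interface output and bound $k$ by the corresponding weights.

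With layers of $\NNs^i$ doubled at most once per side, the count per network is at most $2\entriesfun(\NNs^i)+\entriesfun(\NNs^i)\le 3\entriesfun(\NNs^i)$. This holds when the first and last layers are distinct, and in the single-layer case via the separate-identity-layer option. Summing over $i$ gives (ii). The layer count sums exactly as in the base case, giving (i).
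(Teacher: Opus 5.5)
\textbf{The two-or-more-layer case is fine.} Your layer count is correct, and so is your counting when every interior network $\NNs^2,\dots,\NNs^{j-1}$ has at least two layers. In that case you even get $\entriesfun(\NNs)\le 2\sum_i\entriesfun(\NNs^i)$, because the doubled first and last layers of $\NNs^i$ add at most $\entriesfun(\NNs^i)$. The paper itself only invokes a repeated application of \cite[Lem.~5.3]{ElbGJS21}. As you rightly note, recursing a symmetric factor-$2$ bound compounds; tracking first- and last-layer weights through the recursion just reproduces your direct count.

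\textbf{The gap is the single-layer interior case, and none of your three remedies closes it.}
\begin{itemize}
\item Rewriting the merged layer as $\binom{\Id}{-\Id}\weight(\Id,-\Id)$ gives the same block matrix, with $4\|\weight\|_0$ nonzeros.
\item Keeping the identity as a separate layer produces a network other than the one in Definition~\ref{def:sparseconc}. Separating two consecutive affine maps requires inserting a further ReLU layer, so each such interface adds a layer and (i) fails. Moreover, the $2k$ entries of that layer are not controlled by $\entriesfun(\NNs^i)$.
\item Assuming a nonzero entry per interface output is an extra hypothesis not in the statement.
\end{itemize}

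\textbf{In fact no remedy exists: (ii) is false in this case.} First, (i) forces the two-layer identity $[(\binom{\Id}{-\Id},0),((\Id,-\Id),0)]$. An identity network with $L\ge 3$ layers gives $\layersfun(\NNs^1\circ\NNs^2)=\layersfun(\NNs^1)+\layersfun(\NNs^2)+L-2$. Now take $j=5$ and $\NNs^i=[(1,0)]$ on $\Reals$, so $\entriesfun(\NNs^i)=1$. The sparse concatenation has zero biases and weight matrices $\binom{1}{-1}$, then three copies of $\begin{pmatrix}1&-1\\-1&1\end{pmatrix}$, then $(1,-1)$. Hence $\entriesfun(\NNs)=2+3\cdot 4+2=16>15=3\sum_i\entriesfun(\NNs^i)$. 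For $j$ such networks one gets $4j-4$.

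\textbf{What your argument does establish:}
\begin{itemize}
\item (i), with equality;
\item (ii) with constant $4$ in general;
\item (ii) with constant $2$ if $\NNs^2,\dots,\NNs^{j-1}$ have at least two layers.
\end{itemize}
The last case covers the paper's uses, which are concatenations of copies of $\NNs^\onestepoperator$ and $\NNs^{\onestepoperatorVTWO_\alpha}$, provided these have at least two layers; that holds for the networks built in Theorems~\ref{thm:one_step_affine} and~\ref{thm:one_step_affine2}. So the main theorems are unaffected. The lemma as stated, however, needs that extra hypothesis or a larger constant.
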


We also have a result on the parallelization of networks, see~\cite[Lem.~5.4]{ElbGJS21}.

\begin{lemma}[parallelization] 
\label{lem:parallelization}
Let $\NNs^1,\ldots,\NNs^j$ be neural networks and define the network $\NNs \coloneqq P(\NNs^1,\ldots,\NNs^j)$ as their parallelization according to Definition~\ref{def:par}. Then 
\begin{itemize}
\item[(i)] $\layersfun\bigl(\NNs \bigr)
            \leq
            \max_{i = 1,\ldots,j} \layersfun\bigl(\NNs^i\bigr)$,
\item[(ii)] $\entriesfun\bigl( \NNs\bigr) 
             \leq  
             2\, \Bigl(\sum_{i=1}^j \entriesfun\bigl(\NNs^i\bigr)\Bigr) + 4\,\Bigl(\sum_{i=1}^j \dim_\mathrm{out}\bigl(\NNs^i\bigr)\Bigr)
             \,\max_{i = 1, \ldots,j} \layersfun \bigl(\NNs^i\bigr)$.
\end{itemize}
\end{lemma}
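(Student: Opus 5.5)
The plan is to reduce to the case of equal depths and then use the block-diagonal construction of Definition~\ref{def:par}. For that construction the count of nonzero weights is exactly additive: stacking the matrices $\weight^i_\ell$ block-diagonally and the biases $\bias^i_\ell$ vertically creates no new nonzero entries. Hence, if all $\NNs^i$ already had the common depth $\layers \coloneqq \max_i \layersfun(\NNs^i)$, we would get $\layersfun(\NNs) = \layers$ and $\entriesfun(\NNs) = \sum_i \entriesfun(\NNs^i)$. The realization is also correct: ReLU acts componentwise, so the block structure propagates through every layer and $\NNs(\NNvec^1,\ldots,\NNvec^j) = (\NNs^1(\NNvec^1),\ldots,\NNs^j(\NNvec^j))$. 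The whole task is therefore to pad each shorter network to depth $\layers$ at a cost of at most $\entriesfun(\NNs^i) + 4\dim_\mathrm{out}(\NNs^i)\layers$ additional weights.

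For the padding, fix $i$ with $\layers_i \coloneqq \layersfun(\NNs^i) < \layers$ and set $k \coloneqq \dim_\mathrm{out}(\NNs^i)$. Take the identity network $\NNs^\mathrm{id}$ of Lemma~\ref{lem:identity} on $\Reals^k$ with depth $\layers - \layers_i + 1 \geq 2$. Then form the \emph{classical} concatenation $\NNs^\mathrm{id} \bullet \NNs^i$, which merges the affine output layer of $\NNs^i$ with the first layer of $\NNs^\mathrm{id}$. Its depth is $\layers_i + (\layers - \layers_i + 1) - 1 = \layers$, as required.

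Counting weights for this concatenation is the one step needing care, and I will use the explicit structure of the identity network from \cite{PetV18}. Its first layer is $\bigl(\begin{smallmatrix}\Id\\-\Id\end{smallmatrix}\bigr)$ with zero bias, its middle layers are $\Id_{2k}$, and its last layer is $(\Id\ \ {-\Id})$, using $x = \activation(x) - \activation(-x)$. Merging the first layer with $(\weight^i_{\layers_i}, \bias^i_{\layers_i})$ gives the layer
\[
\Bigl(\begin{pmatrix}\weight^i_{\layers_i}\\-\weight^i_{\layers_i}\end{pmatrix}, \begin{pmatrix}\bias^i_{\layers_i}\\-\bias^i_{\layers_i}\end{pmatrix}\Bigr).
\]
This layer has exactly twice the nonzeros of the original last layer. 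All remaining layers of $\NNs^\mathrm{id}$ contribute at most $2k(\layers - \layers_i + 1) \leq 4k\layers$ weights, since $\layers - \layers_i + 1 \leq \layers$ and by Lemma~\ref{lem:identity}(ii). The padded network therefore satisfies
\[
\entriesfun(\widetilde{\NNs}^i) \leq 2\,\entriesfun(\NNs^i) + 4\,\dim_\mathrm{out}(\NNs^i)\,\layers.
\]
For networks with $\layers_i = \layers$ no padding is needed and this bound holds trivially.

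Finally, parallelize the padded networks $\widetilde{\NNs}^1,\ldots,\widetilde{\NNs}^j$, all of depth $\layers$, and sum the per-network bounds using additivity. This yields (i) with equality and (ii) exactly in the stated form, with $\max_i \layersfun(\NNs^i)$ in place of $\layers$. The only delicate point is the merged-layer count above: if one instead used the sparse concatenation of Definition~\ref{def:sparseconc}, the depth would overshoot by one. This is why I pad via the classical concatenation and rely on the specific $\pm\Id$ form of the identity network.
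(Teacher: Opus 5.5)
Your proposal is correct. The paper gives no proof of its own and simply cites \cite[Lem.~5.4]{ElbGJS21}, and your route is essentially the same padding-then-stacking argument: pad every shorter network to depth $\max_i \layersfun(\NNs^i)$ with an identity network, then stack block-diagonally so that the nonzero weights add up exactly. Your count of the merged layer (exactly twice the nonzeros of the original output layer) plus at most $2\dim_\mathrm{out}(\NNs^i)\max_i\layersfun(\NNs^i)$ weights from the remaining identity layers is valid and yields (i) with equality and (ii) with room to spare.
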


The following lemma follows directly from~\cite[Prop.~3.7]{KutPRS22} and is adjusted for our purposes as the special case of a matrix-matrix product.

\begin{proposition}[Scalar product] \label{prop:scalar_product}
Let $\epsilon \in (0,1)$ and $k \in \NatNum$. 
There exists a neural network $\NNs^{\mathrm{sca},k}$ such that  for $z\geq 1$
\begin{equation}
   \sup_{
   x,y \in \Reals^k, \norm{x}_2 \leq 1, \norm{y}_2 \leq z
   }  |y^\top x - \NNs^{\mathrm{sca},k}(y,x)| \leq \epsilon .
\end{equation}
Further, there exists a generic constant $C_\mathrm{sca} > 0$ independent of $k$, $y$, $x$, $\epsilon$, and $z$ such that 
\begin{itemize}
\item[(i)] $\layersfun\bigl(\NNs^{\mathrm{sca},k} \bigr)
            \leq
            C_\textup{sca}
             \Bigl( \log_2(\frac{1}{\epsilon})+\log_2(k)
             +  
             {\log_2(z)}
            \Bigr)$,
\item[(ii)] $\entriesfun\bigl( \NNs^{\textup{sca},k}\bigr) 
             \leq  
             C_\textup{sca}  
             \Bigl( \log_2(\frac{1}{\epsilon})+\log_2(k)
             + 
             {\log_2(z)}
             \Bigr) k$.
\end{itemize}
\end{proposition}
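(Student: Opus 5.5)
The plan is to derive the statement from the matrix-multiplication result \cite[Prop.~3.7]{KutPRS22}, specialised to a $1\times k$ row vector times a $k\times 1$ column vector, and then to deal separately with the scaling caused by the bound $\norm{y}_2\le z$.

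Recall the building block behind that result. Yarotsky's squaring network approximates $t\mapsto t^2$ on $[-1,1]$ to accuracy $\delta$ with $\calO(\log_2(1/\delta))$ layers and weights. The polarization identity $ab=\frac14\bigl((a+b)^2-(a-b)^2\bigr)$ then gives a product network $\NNs^{\mathrm{mult}}_{\delta,D}$ for $|a|,|b|\le D$. We rescale the inputs by $1/(2D)$, square, and multiply back by $4D^2$ (or, equivalently, $D^2$ after polarization). To get accuracy $\delta$ the squaring net must be run at accuracy $\delta/D^2$. The resulting network therefore has depth and size $\calO(\log_2(1/\delta)+\log_2 D)$.

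The construction proceeds in three steps.

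First, we bound the entries of the two factors: $|x_i|\le\norm{x}_2\le 1$ and $|y_i|\le \norm{y}_2\le z$. So we may take $D=z$. Alternatively, rescale $y$ by $1/z$ in the first affine layer and restore the factor $z$ in the last linear layer, which costs only an extra $\log_2 z$ in the required accuracy.

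Second, we place $k$ copies of $\NNs^{\mathrm{mult}}_{\delta,z}$ in parallel using Lemma~\ref{lem:parallelization}. The pairing of $y_i$ with $x_i$ is done by the first weight matrix, which is a permutation and selection with $\calO(k)$ nonzeros. Summing the $k$ outputs is absorbed into the last affine layer, again with $\calO(k)$ nonzeros. Since all copies have the same depth, no identity bridging is needed. The parallelization lemma then gives depth $\calO(\log_2(1/\delta)+\log_2 z)$ and weight count $\calO\bigl(k(\log_2(1/\delta)+\log_2 z)\bigr)$, where the output-dimension term $4k\cdot\layersfun$ is of the same order.

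Third, we choose the accuracy $\delta=\epsilon/k$. The total error is then $|y^\top x-\NNs(y,x)|\le\sum_i|y_ix_i-\NNs^{\mathrm{mult}}(y_i,x_i)|\le k\delta=\epsilon$. Substituting $\log_2(1/\delta)=\log_2(1/\epsilon)+\log_2 k$ yields exactly the bounds (i) and (ii). Because $\epsilon<1$ and $z\ge 1$, all logarithms are nonnegative, and the additive constants can be absorbed into $C_\mathrm{sca}$.

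The only step needing care is the first: checking that the $z$-dependence enters only logarithmically and that the constants do not depend on $k$ or $z$. This amounts to tracking the rescaling in the squaring network. Equivalently, one can cite \cite[Prop.~3.7]{KutPRS22} with dimensions $1,k,1$ and norm bound $\max\{1,z\}$, since that proposition already contains the $\log_2$ of the matrix norm bound in its complexity. Here the scalar-product case has $\calO(k)$ rather than cubic size, because the count there is proportional to the product of the three dimensions, which is $1\cdot k\cdot 1$.
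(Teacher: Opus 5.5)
Your proposal is correct and is essentially the paper's argument: the paper simply invokes \cite[Prop.~3.7]{KutPRS22} with dimensions $1,k,1$ and norm bound $z$. Your direct construction ($k$ parallel product networks at accuracy $\epsilon/k$ on inputs bounded by $z$, summed in the last affine layer) is that proposition unpacked for this case, and you also note the citation explicitly. One small caveat concerns your remark that nonnegativity of the logarithms lets you absorb the additive constants: for $k=1$, $z=1$ and $\epsilon\to 1$ the right-hand sides tend to $0$, while any network has depth at least $1$. This degenerate case, however, is a defect of the statement as formulated, which the paper's citation-based proof shares, not of your construction.
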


With Proposition~\ref{prop:scalar_product}, we may deduce the generalization to matrix-vector multiplication for sparse matrices, which is the essential ingredient for approximating the maps~$\onestepoperator$ and~$\onestepoperatorVTWO_\alpha$. 

\begin{theorem}[sparse matrix-vector multiplication]
\label{thm:matrix_vec_mult}
Let $\epsilon \in (0,1)$. Then there exists a neural network $\NNs^\mathrm{mult}$
with $\dim_\mathrm{in}(\NNs^\mathrm{mult}) = \numNZ + n$ and $\dim_\mathrm{out}(\NNs^\mathrm{mult}) = n$ such that 
for  $z\geq 1$
\begin{equation}
    \sup_{A \in \SparseSpacekk \cap \Knz{1},\,r \in \knz{z}} \| A r - \NNs^\mathrm{mult}(A,r) \|_2 \leq \epsilon .
\end{equation}
Further, there exists a generic constant $C_\mathrm{mult} > 0$ independent of $n,\,A,\, x$, $\epsilon$, and $z$ such that
\begin{itemize}
\item[(i)] $
 \layersfun\Bigl(\NNs^{\mathrm{mult}} \Bigr)
 \leq
C_\textup{mult}
             \Bigl( \log_2(\frac{1}{\epsilon})
             +
             \log_2(n)
             +
             {\log_2(z)}
            \Bigr)
             $,
\item[(ii)]
$
\entriesfun\Bigl(\NNs^{\mathrm{mult}}\Bigr) 
    \leq  
    C_\textup{mult} 
    \Bigl( \log_2(\frac{1}{\epsilon})
    +
    \log_2(n)
    +
    {\log_2(z)}
    \Bigr)
    \numNZ
    $.
\end{itemize}
\end{theorem}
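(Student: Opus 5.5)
The plan is to build $\NNs^\mathrm{mult}$ row by row from the scalar product network of Proposition~\ref{prop:scalar_product}. For each row $i$ the entry $(Ar)_i$ equals $\sum_{j \in \numNZrowvector_i} A_{i,j} r_j$. This is a scalar product of length $\numNZrowi{i}$ between the vector $a^{(i)} \coloneqq (A_{i,j})_{j\in\numNZrowvector_i}$, which is read directly from the compressed representation $A^\textup{v}$, and the vector $r^{(i)} \coloneqq (r_j)_{j\in\numNZrowvector_i}$, which is a sub-vector of $r$. The selection of these sub-vectors is a fixed linear map, since it only picks coordinates. It can therefore be absorbed into the first weight matrix, or realized by a one-layer linear network. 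Each output row of that matrix carries a single entry $1$, so the selection costs only $\mathcal{O}(\numNZ)$ weights.

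Next I check the norm assumptions of Proposition~\ref{prop:scalar_product}. Since $\|A\|_2\le 1$, every row satisfies $\|a^{(i)}\|_2 \le \|A^\top e_i\|_2 \le 1$. We also have $\|r^{(i)}\|_2\le \|r\|_2 \le z$. Hence I can apply $\NNs^{\mathrm{sca},\numNZrowi{i}}$ with input $(r^{(i)},a^{(i)})$ (here $y=r^{(i)}$, $x=a^{(i)}$) and accuracy $\epsilon/\sqrt{n}$. The output vector then has error
\[
\|Ar-\NNs^\mathrm{mult}(A,r)\|_2 \le \Bigl(\sum_{i=1}^n (\epsilon/\sqrt n)^2\Bigr)^{1/2} = \epsilon .
\]
This choice of accuracy is what produces the $\log_2(n)$ term, because $\log_2(\sqrt n/\epsilon) = \log_2(1/\epsilon) + \tfrac12\log_2 n$. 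The other term from the proposition, $\log_2(\numNZrowi{i}) \le \log_2 n$, is absorbed the same way.

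The network itself is the parallelization $P(\NNs^{\mathrm{sca},\numNZrowi{1}},\ldots,\NNs^{\mathrm{sca},\numNZrowi{n}})$ composed with the selection map. By Proposition~\ref{prop:scalar_product} each block has depth at most $C_\textup{sca}(\log_2(1/\epsilon)+\tfrac32\log_2 n+\log_2 z)$. By Lemma~\ref{lem:parallelization}(i) the depth of the parallelization is the maximum of these, which gives (i).

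For (ii), Lemma~\ref{lem:parallelization}(ii) gives two contributions. The first is $2\sum_i C_\textup{sca}(\cdots)\numNZrowi{i} = 2C_\textup{sca}(\cdots)\numNZ$. The second comes from the identity padding, $4\sum_i \dim_\mathrm{out}\cdot\max\layersfun = 4n\,C(\cdots)$, which is fine because $n\le\numNZ$. The selection layer adds $\mathcal{O}(\numNZ)$ weights. I would compose it with the parallel block either directly, merging it into the first layer of each scalar network so that its weight count grows at most by a constant factor since each input is copied once, or via the sparse concatenation lemma. Either way the final bound is $C_\textup{mult}(\log_2(1/\epsilon)+\log_2 n+\log_2 z)\numNZ$.

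The main obstacle is a bookkeeping issue about bias and padding in the parallelization. Networks of different depths must be padded with identity networks, and the weight count has to stay linear in $\numNZ$ rather than picking up a factor $n$ times depth from uneven row lengths. Lemma~\ref{lem:parallelization} already accounts for this padding, and the resulting term $n\cdot\max\layersfun$ is dominated by $\numNZ$ times the logarithmic factor, so it does not break the bound. The second point to verify is that merging the linear selection map into the first layer does not multiply the weight count by more than a constant. Each coordinate $r_j$ is duplicated into at most as many positions as there are rows containing column $j$, and these total $\numNZ$ positions, so the cost stays proportional to the first-layer weights.
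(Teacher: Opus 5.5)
Your proposal is correct and follows the paper's proof essentially step for step: restrict $r$ row-wise by a fixed selection layer, apply $\NNs^{\mathrm{sca},\numNZrowi{i}}$ with accuracy $\epsilon/\sqrt{n}$ (using $\numNZrowi{i}\le n$), parallelize via Lemma~\ref{lem:parallelization}, and sum the squared row errors. Your explicit checks fill in details the paper leaves implicit: that $\|e_i^\top A\|_2\le\|A\|_2\le 1$ meets the hypothesis of Proposition~\ref{prop:scalar_product}, and that the padding term $4n\max\layersfun$ is dominated by $\numNZ$ times the logarithmic factor.
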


\begin{proof}
    Recall that the exact sparsity pattern of $A$ is given. For the $i$th row, we first restrict $r$ to the relevant entries with a one-layer network with $\calO(\numNZrowi{i})$ non-zero entries (a classical restriction matrix). Then, we approximate the sparse scalar product of the $\numNZrowi{i}$ entries of $e_i^\top A$, where $e_i$ denotes the $i$-th unit vector, with the corresponding restricted vector of $r$-values, which altogether realizes $(Ar)_i$. With Proposition~\ref{prop:scalar_product}, this is possible up to accuracy $\epsilon/\sqrt{n}$ with a network $\NNs^{\mathrm{sca},\numNZrowi{i}}$ fulfilling
    \begin{align}
        \layersfun\Bigl(\NNs^{\mathrm{sca},\numNZrowi{i}} \Bigr)
                    &\leq
                    C_\textup{sca}
                     \Bigl( \log_2(\tfrac{1}{\epsilon})+\log_2(n)
                     + {\log_2(z)}
                    \Bigr),
                    \\
                            \entriesfun\Bigl( \NNs^{\textup{sca},\numNZrowi{i}}\Bigr) 
                     &\leq  
                     C_\textup{sca}  
                     \Bigl( \log_2(\tfrac{1}{\epsilon})+\log_2(n)
                     + {\log_2(z)}
                     \Bigr) \numNZrowi{i}.
    \end{align}
    Note that we have used that $\numNZrowi{i} \leq n$. 

    Computing the $(Ar)_i$ in parallel, we obtain with Lemma~\ref{lem:parallelization} for the final network $\NNs^\mathrm{mult}$ the desired bounds, using that $\numNZ = \sum_{i=1}^n \numNZrowi{i}$ and 
    \begin{equation}\label{eq:boundMult}
        \| A r - \NNs^\mathrm{mult}(A,r) \|_2^2 = \sum_{i=1}^n |(Ar)_i - \NNs^{\mathrm{sca},\numNZrowi{i}}(e_i^\top A,r)|^2 \leq n \epsilon^2/n = \epsilon^2.
    \end{equation}
    Note that we slightly misuse the notation by implicitly treating $e_i^\top A$ and $r$ as vectors of length $\numNZrowi{i}$. 
\end{proof}

The next result is again important for the construction of both $\onestepoperator$ and $\onestepoperatorVTWO_\alpha$.

\begin{lemma} \label{lem:scaling}
    Let $\alpha \in \Reals$ and $n \in \NatNum$. Then, there exists a neural network $\NNs^\mathrm{scale}$, that realizes the map 
    \begin{equation}
        \Reals^n \times \Reals^n \to \Reals^n,
        \quad
        (x,y) \mapsto \alpha x + y.
    \end{equation}
    Further, we have 
    \begin{itemize}
        \item[(i)] $\layersfun\Bigl(\NNs^{\mathrm{scale}} \Bigr)
                    \leq
                    2$,
        \item[(ii)] $\entriesfun\Bigl( \NNs^{\textup{scale}}\Bigr) 
                     \leq  
                     8 n$.
        \end{itemize}
\end{lemma}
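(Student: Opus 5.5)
The map $(x,y)\mapsto \alpha x + y$ is affine, so the network can be written down explicitly. The only obstacle is that Definition~\ref{def:neuralnet} applies the ReLU after every layer except the last. A one-layer network $[(\weight_1,\bias_1)]$ with $\weight_1 = (\alpha \Id \;\; \Id)\in\Reals^{n\times 2n}$ and $\bias_1=0$ would already realize the map exactly, with at most $2n$ non-zero weights. Since the statement allows up to two layers, I would instead build a two-layer version. This is what later concatenations need, because sparse concatenation and parallelization interact more cleanly with networks having at least one hidden layer.

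The key tool is the identity $t = \activation(t) - \activation(-t)$ for $t\in\Reals$. I would take the first layer to compute the target together with its negative and apply the ReLU. The weights are
\begin{equation}
  \weight_1 = \begin{pmatrix} \alpha \Id & \Id \\ -\alpha \Id & -\Id \end{pmatrix} \in \Reals^{2n\times 2n}, \qquad \bias_1 = 0,
\end{equation}
which gives the hidden state $\NNvec_1 = \bigl(\activation(\alpha x + y),\, \activation(-\alpha x - y)\bigr)$. The second, affine output layer is $\weight_2 = (\Id \;\; -\Id)\in\Reals^{n\times 2n}$ with $\bias_2 = 0$. By the identity above, its output is $\activation(\alpha x+y)-\activation(-(\alpha x+y)) = \alpha x + y$ componentwise. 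The map is therefore realized exactly, with no approximation error.

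For the complexity, (i) holds immediately since $\layersfun(\NNs^{\mathrm{scale}})=2$. For (ii), I would count non-zeros. $\weight_1$ has at most $4n$ non-zero entries: exactly $4n$ if $\alpha\neq0$, and $2n$ if $\alpha=0$. $\weight_2$ has $2n$ non-zero entries, and both biases vanish. The total is at most $6n\le 8n$, so the bound holds with room to spare.
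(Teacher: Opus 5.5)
Your proposal is correct and is essentially the paper's argument. The paper runs the two-layer identity network of Lemma~\ref{lem:identity} (which is the same trick $t=\activation(t)-\activation(-t)$) on $x$ and $y$ in parallel and forms $\alpha x+y$ in the output layer, giving hidden width $4n$ and the bound $8n$; you form $\alpha x+y$ before the ReLU instead, which halves the hidden width and gives the slightly sharper count $6n$.
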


\begin{proof} 
    The result follows directly from using Lemma~\ref{lem:identity} with $L=2$ for $x$ (scaled by the factor $\alpha$) and $y$ in parallel, and a summation in the last layer. 
\end{proof}

Finally, we conclude this section by assembling all the ingredients for the proofs of Theorem~\ref{thm:one_step_affine} and Theorem~\ref{thm:one_step_affine2}. 

\begin{proof}[Proof of Theorem~\ref{thm:one_step_affine}]
For the construction of the network to approximate the operator $\onestepoperator$, we parallelize the networks that 
\begin{itemize}
    \item realize the identity with input $A$ based on Lemma~\ref{lem:identity},
    \item approximate $Ar$ with accuracy $\epsilon$ according to Theorem~\ref{thm:matrix_vec_mult},
    \item calculate $ r + c$ based on Lemma~\ref{lem:scaling}. 
\end{itemize}
Clearly, the approximation of $Ar$ requires the deepest network.
Therefore, we directly get with Lemma~\ref{lem:parallelization} and a slight adjustments of the constants in Theorem~\ref{thm:matrix_vec_mult}
\begin{equation}
    \layersfun\Bigl(\NNs^{\onestepoperator} \Bigr) \leq  C_\onestepoperator
                     \Bigl( \log_2(\tfrac{1}{\epsilon})
                     +
                     \log_2(n)
                     + 
                    {\log_2(z)}
                    \Bigr)
\end{equation}
and
\begin{equation}
    \entriesfun\Bigl(\NNs^{\onestepoperator}\Bigr) 
            \leq  
            C_\onestepoperator 
            \Bigl( \log_2(\tfrac{1}{\epsilon})
            +
            \log_2(n)
            +
            {\log_2(z)}
            \Bigr)
            \numNZ. 
\end{equation}
Note that we have used that $n \leq \numNZ$. Therefore, the effort for computing the approximation of $Ar$ dominates the overall complexity. 
This concludes the proof. 
\end{proof}

\begin{proof}[Proof of Theorem~\ref{thm:one_step_affine2}]
The construction is very similar to the one in the proof of Theorem~\ref{thm:one_step_affine}. We parallelize the networks that
\begin{itemize}
    \item realize the identity with input $A$ using Lemma~\ref{lem:identity},
    \item approximate $Ar$ with accuracy $\epsilon$ according to Theorem~\ref{thm:matrix_vec_mult} followed by applications of Lemma~\ref{lem:scaling}. 
    \item realize the identity with input $r$ using once more Lemma~\ref{lem:identity},
\end{itemize}
Again, the dominant contribution comes from the approximation of $Ar$ and with Lemma~\ref{lem:parallelization} the bounds from Theorem~\ref{thm:one_step_affine} are resembled (up to constants).
\end{proof}

\section{Richardson method - proof of Theorem~\ref{thm:Richardson}}
\label{sec:Ric_proof}

In this section, we provide the construction of the network stated in Theorem~\ref{thm:Richardson}. In the first step, we determine the necessary number of iterations~$m$
in \eqref{eq:Neumann} to achieve the required accuracy.

\begin{lemma} \label{lem:pol_approx_r}
Let $\epsilon >0$, $\ScaleConst\geq 1$,
and let $\omega$ be given as in \eqref{eq:Neumann}. 
Then, there exists an integer $m\geq 1$ such that
\begin{equation}
   \sup\limits_{A\in \SparseSpacekk \cap  \Sigman{\lambda,\Lambda}, \, \rhs\in \knz{\ScaleConst \lambda} } 
    \Bigl\|A^{-1} \rhs
    -
    \sum\limits_{\ell = 0}^{m} (I - \omega A)^\ell (\omega \rhs)
   \Bigr\|_2 
        \leq 
        \frac{\epsilon}{2},
\end{equation}
where $m = m(\epsilon,\ScaleConst,\rho_1)$
satisfies
\begin{equation}  
m \geq \frac{ | \log_2 ( \frac{\epsilon}{2 \ScaleConst} ) | }{| \log_2(\rho_{1})|}
\end{equation}
with 
$\rho_{1}$ defined in \eqref{eq:def_rho_alpha}.
\end{lemma}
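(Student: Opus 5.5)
The plan is to write the truncation error of the Neumann series in closed form and bound it by spectral calculus. Set $\widehat{A} = I - \omega A$ and $\widehat{\rhs} = \omega \rhs$. Since $\omega A = I - \widehat{A}$ and $A$ is invertible, the identity
\begin{equation}
A^{-1}\rhs = (I-\widehat{A})^{-1}\widehat{\rhs} = \sum_{\ell=0}^{\infty} \widehat{A}^\ell \widehat{\rhs}
\end{equation}
holds, with the series converging because $\|\widehat{A}\|_2 \leq \rho_1 < 1$. Here we use $\widehat{A}\in\Knz{\rho_1}$, as derived before \eqref{eq:Neumann}. Alternatively, we can use the telescoping identity
\begin{equation}
A^{-1}\rhs - \sum_{\ell=0}^{m}\widehat{A}^\ell\widehat{\rhs} = \widehat{A}^{m+1} A^{-1}\rhs ,
\end{equation}
which follows from $(I-\widehat{A})\sum_{\ell=0}^m \widehat{A}^\ell = I - \widehat{A}^{m+1}$ and from the fact that $\widehat{A}$ and $A^{-1}$ commute.

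Next we bound the error uniformly over the admissible class. Because $A$ is symmetric, $\widehat{A}$ is symmetric as well, so $\|\widehat{A}^{m+1}\|_2 \leq \rho_1^{m+1}$. Moreover $\|A^{-1}\|_2 \leq 1/\lambda$ for $A\in\Sigman{\lambda,\Lambda}$, and $\|\rhs\|_2\leq \ScaleConst\lambda$. Together these give
\begin{equation}
\Bigl\|A^{-1}\rhs - \sum_{\ell=0}^{m}\widehat{A}^\ell\widehat{\rhs}\Bigr\|_2 \leq \rho_1^{m+1}\,\frac{\ScaleConst\lambda}{\lambda} = \ScaleConst\,\rho_1^{m+1}\leq \ScaleConst\,\rho_1^{m}.
\end{equation}
This bound does not depend on the sparsity pattern, so intersecting with $\SparseSpacekk$ is harmless.

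It remains to choose $m$. We need $\ScaleConst\rho_1^m\leq \epsilon/2$, which is equivalent to $m\,|\log_2\rho_1| \geq \log_2(2\ScaleConst/\epsilon)$ whenever $\rho_1\in(0,1)$. If $\epsilon/(2\ScaleConst)<1$, the right-hand side equals $|\log_2(\frac{\epsilon}{2\ScaleConst})|$, so any integer $m\geq 1$ with the stated lower bound works; for instance, the ceiling $m_{\textup{Ric}}$.

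Two edge cases need checking. If $\epsilon\geq 2\ScaleConst$, then any $m\geq1$ suffices. If $\rho_1=0$, i.e. $\lambda=\Lambda$, the error already vanishes for $m\geq 0$ and the quotient is read as $0$. These degenerate cases are the only real subtlety; the core estimate is routine.
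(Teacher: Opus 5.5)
Your proof is correct and follows essentially the paper's argument. The paper writes the Richardson error via the recursion $e^m=(I-\omega A)^m x$ and bounds it by $\rho_1^m\|A^{-1}\|_2\|\rhs\|_2\leq\rho_1^m\ScaleConst$, which is your telescoping identity. Your exponent $m+1$ is actually the precise one for the sum up to $\ell=m$ (the paper is slightly loose with the index), and your handling of the degenerate cases $\epsilon\geq 2\ScaleConst$ and $\rho_1=0$ is a harmless extra.
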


\begin{proof}
We use the same technique as for the error bound of the Richardson method,
see for example
\cite[Sec.~4.2.1]{Saa03_book},
and recall the approximation sequence from~\eqref{eq:Neumann} 
with $x^0 = 0$. Using that $x =A^{-1}r = (\omega A)^{-1} \omega r$ and defining the error
$e^m = x -x^m$, we obtain the recursion
\begin{equation}
    e^m = (I - \omega A) e^{m-1}
     = (I - \omega A)^m x .
\end{equation}
Since $\| I - \omega A \|_2 \leq \rho_1$, we conclude
\begin{equation}
\|e^m\|_2 
\leq
\rho_1^m \|x\|_2
\leq
\rho_1^m \|A^{-1}\|_2 \|r\|_2
\leq
\rho_1^m \lambda^{-1} \ScaleConst \lambda
=
\rho_1^m \ScaleConst .
\end{equation}
We can then guarantee the claimed bound if
\begin{equation}
\rho_1^m \ScaleConst \leq  \frac{\epsilon}{2}
\qquad\text{or, equivalently,}\qquad 
(\rho_1)^{-m} 
 \geq \frac{2 \ScaleConst}{\epsilon}
\end{equation}
as stated.
\end{proof}

Finally, we provide the construction of the neural network which approximates the polynomial in \eqref{eq:Neumann}.

\begin{proof}[Proof of Theorem~\ref{thm:Richardson}]
The main idea of the proof is to approximate the polynomial construction in Lemma~\ref{lem:pol_approx_r} with the optimal $m = m_\mathrm{Ric}$, followed by a triangle inequality.

In the first step, we construct a network to perform the rescaling
$\widehat{A} = I -\omega A$ and $\widehat{\rhs} = \omega \rhs$,
using Lemma~\ref{lem:scaling}. The complexity for this operation scales linearly and is therefore negligible for the overall complexity bounds. The same statement holds true for the final rescaling in the very end.

We focus on the main scaling, which is introduced via the realization of the polynomial in Lemma~\ref{lem:pol_approx_r}. We express the desired polynomial via the map of Theorem~\ref{thm:one_step_affine}, and note that
\begin{equation}\label{eq:onestepm}
\onestepoperator^{k}(X^0) = \onestepoperator \circ \ldots \circ \onestepoperator (X^0)
= \Bigl( \widehat{A}, \widehat{A}^{m} \widehat{r}, \sum_{\ell=0}^{k-1} \widehat{A}^\ell \widehat{r}\Bigr)   
,
\quad
X^0 = (\widehat{A},\widehat{r},0),
\end{equation}
where we also use $\circ$ here for the standard concatenation of operators.
We approximate~\eqref{eq:onestepm} with $k-1$ sparse concatenations of $\NNs^{\onestepoperator}$ with itself and abbreviate
$[\NNs^\onestepoperator]^k = \NNs^\onestepoperator \circ \ldots \circ \NNs^\onestepoperator$. Note that $\NNs^\onestepoperator$ is 
constructed with 
accuracy $\delta = \delta(\epsilon) = {\epsilon}/(2m^2) > 0$ 
for input variables 
of size $z \leq m +2$.

(a) We now show by induction that for $i = 0,\ldots,m$ 
\begin{equation} \label{eq:induction_hyp_Ric}
    \| \onestepoperator^i (X^0) -[\NNs^\onestepoperator]^i (X^0)  \|_2 \leq \tfrac\epsilon2,
    \quad
    \| [\NNs^\onestepoperator]^i  (X^0)  \|_2 \leq i + 3 ,
\end{equation}
and note that the induction is trivially anchored at $i = 0$, if we define in this case both operators as the identity.

Now let $1 \leq i \leq m$, assume that \eqref{eq:induction_hyp_Ric} holds for $0,\ldots,i-1$, 
and compute
\begin{align}
     \| \onestepoperator^i (X^0) -& [\NNs^\onestepoperator]^i (X^0) \|_2
    \\
    &\leq
    \sum\limits_{\ell=0}^{i-1}
     \| \onestepoperator^{i-\ell} ( [\NNs^\onestepoperator]^{\ell} (X^0)) 
     -  \onestepoperator^{i-\ell-1} ( [\NNs^\onestepoperator]^{\ell+1} (X^0)) \|_2
    \\
    &=
    \sum\limits_{\ell=0}^{i-1}
        \| \onestepoperator^{i-\ell-1} (\onestepoperator( [\NNs^\onestepoperator]^{\ell} (X^0))) 
     -  \onestepoperator^{i-\ell-1} (\NNs^\onestepoperator ( [\NNs^\onestepoperator]^{\ell} (X^0))) \|_2
    \\
     &=
    \sum\limits_{\ell=0}^{i-1}
     \| \onestepoperator^{i-\ell-1} (\onestepoperator (X^\ell)) 
     - \onestepoperator^{i-\ell-1}( \NNs^\onestepoperator  (X^\ell)) \|_2
\end{align}
for 
$X^{\ell} =[\NNs^\onestepoperator]^{\ell} (X^0)$. 
Further,
note that the first and last argument of $\onestepoperator ( X^{\ell})$ and $\NNs^\onestepoperator ( X^{\ell})$ are identical,
and we denote the second arguments by
$b$ and $\widetilde{b}$, respectively. We now have to consider the stability of $\onestepoperator^{k}$ for such arguments.
We first compute
\begin{align}
\| \onestepoperator^{k} (\widehat{A},b,c) - \onestepoperator^{k}   (\widehat{A},\widetilde{b},c) \|_2 
&=
\Bigl\| ( 0 , \widehat{A}^k (b-\widetilde{b} ),  \sum\limits_{j=0}^{k-1} \widehat{A}^j (b-\widetilde{b} )  \Bigr\|_2
\leq (k+1) \| b-\widetilde{b} \|_2 ,
\end{align}
using that $\|\widehat{A}\|\leq 1$.
Since $\ell \leq i-1$, we can apply \eqref{eq:induction_hyp_Ric} to obtain the bounds on the argument $X^{\ell}$ and we can use the approximation property in
Theorem~\ref{thm:one_step_affine} to obtain 
$\| b-\widetilde{b} \|_2\leq \delta$. Plugging this into the sum and using $i-1 \leq m$, we estimate
\begin{align}
\| \onestepoperator^i (X^0) - [\NNs^\onestepoperator]^i (X^0) \|_2
    \leq
  m^2 \delta =  \tfrac\epsilon2 .
\end{align}
For the second part of the induction, we write 
\begin{align}
    \| [\NNs^\onestepoperator]^i (X^0) \|_2
&\leq
    \| \onestepoperator^i (X^0)  \|_2
+
   \| \onestepoperator^i (X^0) - [\NNs^\onestepoperator]^i (X^0) \|_2
  \leq
  i + 2 + \tfrac\epsilon2 \leq  i + 3,
\end{align}
using $\|\widehat{r}\|\leq 1$ and the stability of $\onestepoperator^i$ based on~\eqref{eq:onestepm}. This closes the induction.

(b) For the size of the network, we first observe that with $m = m_\textup{Ric}$
\begin{equation}
    \log_2 (\tfrac{1}{\delta}) \sim    \log_2 (\tfrac{1}{\epsilon})  +   \log_2 (m_\textup{Ric}) 
\end{equation}
and the estimates in Theorem~\ref{thm:one_step_affine} then gives
\begin{itemize}
\item[(i)] $
 \layersfun\bigl(\NNs^{\textup{Ric}} \bigr)
 \leq
C_{\textup{Ric}}
m_\textup{Ric}
 \bigl(  \log_2 (\frac{1}{\epsilon})  +   
             \log_2(n) 
             +
             \log_2 (m_\textup{Ric}) 
            \bigr),
             $
\item[(ii)]
$
\entriesfun\bigl(\NNs^{\textup{Ric}}\bigr) 
    \leq  
     C_{\textup{Ric}}
m_\textup{Ric}
    \bigl(  \log_2 (\frac{1}{\epsilon})  
    +
         \log_2(n)
             +   \log_2 (m_\textup{Ric}) 
            \bigr)
    \numNZ,
    $     
\end{itemize} 
which is precisely the claimed network size.

Finally, we combine Lemma~\ref{lem:pol_approx_r} with the above construction, which gives the result using the triangle inequality. 
\end{proof}

\section{Cg-type method - proof of Theorem~\ref{thm:cg}}

\label{sec:cg_proof}

In this section, we prove our second main result in Theorem~\ref{thm:cg},
for which we recall the decomposition in \eqref{eq:cg_decomp}. The first estimate can be deduced from classical estimates, which we collect in the following lemma, see for example~\cite{Saa03_book}.
\begin{lemma} \label{lem:pol_approx_cg}
Let $\epsilon >0$. Then, there exists a polynomial $q_{m-1} \in \calP_{m-1}$ such that 
\begin{equation}
   \sup\limits_{A\in \SparseSpacekk \cap  \Sigman{\lambda,\Lambda}, \, \rhs\in \knz{\ScaleConst \lambda} } 
    \norm{ A^{-1} \rhs
    -
    q_{m-1}(\tfrac{1}{\Lambda}  A ) 
    \tfrac{1}{\Lambda}\rhs
   }_2 
        \leq 
        \frac{\epsilon}{2}
\end{equation}
where $m = m(\epsilon,\ScaleConst,\rho_{1/2})$
satisfies
\begin{equation}  
m \geq \frac{ | \log_2 ( \frac{\epsilon}{4 \ScaleConst} ) | }{| \log_2(\rho_{1/2})|}
\end{equation}
with 
$\rho_{1/2}$ defined in 
\eqref{eq:def_rho_alpha}.
\end{lemma}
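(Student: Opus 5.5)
We prove Lemma~\ref{lem:pol_approx_cg} with the classical \Chebname argument behind the cg error bound, carried out directly for a polynomial that does not depend on $A$ or $\rhs$. Write $\widehat{A} = \frac{1}{\Lambda}A$ and $\widehat{\rhs} = \frac{1}{\Lambda}\rhs$. For any $A\in\Sigmanll$ the spectrum of $\widehat{A}$ lies in $[\widehat\lambda,1]$ with $\widehat\lambda = \lambda/\Lambda = \uppercondbound^{-1}$. For $q\in\calP_{m-1}$, set $p(t) = 1 - t\,q(t)\in\calP_m$, so that $p(0)=1$. Since $A^{-1}\rhs = \widehat{A}^{-1}\widehat{\rhs}$, we obtain
\begin{equation}
A^{-1}\rhs - q(\widehat{A})\widehat{\rhs} = \bigl(I - \widehat{A}q(\widehat{A})\bigr)\widehat{A}^{-1}\widehat{\rhs} = p(\widehat{A})\,A^{-1}\rhs .
\end{equation}
Because $\widehat{A}$ is symmetric, this gives
\begin{equation}
\norm{A^{-1}\rhs - q(\widehat{A})\widehat{\rhs}}_2 \le \max_{t\in[\widehat\lambda,1]}|p(t)|\,\norm{A^{-1}}_2\norm{\rhs}_2 \le \max_{t\in[\widehat\lambda,1]}|p(t)|\,\ScaleConst .
\end{equation}
The uniformity over the class therefore reduces to a scalar approximation problem on the fixed interval $[\uppercondbound^{-1},1]$. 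Note that we use the Euclidean norm here rather than the $A$-norm, so that we do not pick up an extra factor $\sqrt{\kappa}$.

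We choose $p$ to be the shifted and scaled \Chebname polynomial
\begin{equation}
p(t) = \frac{T_m\bigl(\frac{1+\widehat\lambda-2t}{1-\widehat\lambda}\bigr)}{T_m\bigl(\frac{1+\widehat\lambda}{1-\widehat\lambda}\bigr)} .
\end{equation}
This polynomial satisfies $p(0)=1$, so $q(t) = (1-p(t))/t$ is a genuine polynomial of degree $m-1$, fixed once $\lambda,\Lambda,m$ are fixed. Since $|T_m|\le 1$ on $[-1,1]$, we have $\max_{[\widehat\lambda,1]}|p|\le 1/T_m(\gamma)$ with $\gamma = \frac{\uppercondbound+1}{\uppercondbound-1}$. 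Using $T_m(\gamma) = \frac12\bigl[(\gamma+\sqrt{\gamma^2-1})^m + (\gamma-\sqrt{\gamma^2-1})^m\bigr]\ge \frac12(\gamma+\sqrt{\gamma^2-1})^m$ and the standard identity $\gamma+\sqrt{\gamma^2-1} = \frac{\sqrt{\uppercondbound}+1}{\sqrt{\uppercondbound}-1} = \rho_{1/2}^{-1}$, we get $\max|p|\le 2\rho_{1/2}^m$.

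Combining the two estimates yields an error of at most $2\ScaleConst\rho_{1/2}^m$. This is at most $\epsilon/2$ precisely when $\rho_{1/2}^{-m}\ge 4\ScaleConst/\epsilon$, that is, when $m\ge |\log_2(\frac{\epsilon}{4\ScaleConst})|/|\log_2\rho_{1/2}|$. This explains the constant $4$ in $m_{\textup{cg}}$. (For $\epsilon/(4\ScaleConst)\ge 1$ any $m\ge 1$ works.) The only step requiring care is the verification of $\gamma+\sqrt{\gamma^2-1}=\rho_{1/2}^{-1}$. It follows from $\gamma^2-1 = \frac{4\uppercondbound}{(\uppercondbound-1)^2}$, which gives $\gamma+\sqrt{\gamma^2-1} = \frac{(\sqrt{\uppercondbound}+1)^2}{\uppercondbound-1}$. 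The degenerate case $\uppercondbound=1$ is excluded since $\lambda<\Lambda$.
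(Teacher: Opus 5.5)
Your proof is correct and follows essentially the same route as the paper: the shifted and scaled Chebyshev polynomial with $p(0)=1$, $q=(1-p)/t$, and the resulting bound $2\ScaleConst\rho_{1/2}^m$. The only difference is that the paper quotes the cg energy-norm estimate $\|x-x_m\|_{\widehat{A}}\le 2\rho_{1/2}^m\|x\|_{\widehat{A}}$ and converts to $\|\cdot\|_2$ via two factors $\mineig^{-1/2}(\widehat{A})$, so it loses no $\sqrt{\kappa}$ either, whereas you bound $\|p(\widehat{A})\|_2$ directly and verify the Chebyshev estimate yourself. (Incidentally, your $q=(1-p)/t$ and your argument $\frac{1+\widehat\lambda-2t}{1-\widehat\lambda}$ are the correct choices for $\widehat{A}$; the paper's displayed $q_{m-1}=(p_m-1)/z$ has a sign slip, and its $\sigma$ is scaled for $A$ rather than $\widehat{A}$.)
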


\begin{proof}
The idea is to start with the (matrix) polynomial which is used in order to prove the approximation property of the cg method.
Let us denote 
$\widehat{A} = \tfrac{1}{\Lambda}  A$
and
$\widehat{\rhs} = \tfrac{1}{\Lambda} \rhs$.
For the convergence proof of the cg method, 
one solves the minimization problem over all polynomial $p_m \in \calP_m$ with $p_m(0)=1$ such that the induced energy norm
\begin{equation}
    \| p_m(\widehat{A} ) 
       \widehat{\rhs}  \|_{\widehat{A}}
\end{equation}
becomes small. Here, the optimal choice is the rescaled \Chebname polynomial given as 
\begin{align}
    p_m(z) = \frac{T_m(\sigma(z))}{T_m(\sigma(0))}, \quad
    \sigma(z) = \frac{\uppercondbound+1}{\uppercondbound-1} - \frac{2}{\Lambda - \lambda} z.
\end{align}
Since $p_m(0) = 1$, we can define the polynomial of degree $m-1$ by 
$q_{m-1}(z) = \frac{p_m(z)-1}{z}$.
This gives for $x_m = q_{m-1}(\tfrac{1}{\Lambda}  A ) 
\tfrac{1}{\Lambda}\rhs$ the estimate in the induced energy norm, i.e.,
\begin{equation}
    \| x - x_m \|_{\widehat{A}} \leq 2 \rho_{1/2}^m \|x\|_{\widehat{A}} ,
\end{equation}
see \cite[Sec.~6.11.2]{Saa03_book}.
This allows us to conclude 
\begin{align}
    \|  x- x_m\|_2 &\leq 
    \mineig^{-1/2}(\widehat{A}) \|  x- x_m\|_{\widehat{A}} 
    \leq 
    \mineig^{-1/2}(\widehat{A}) 2 \rho_{1/2}^m \|\widehat{A}^{-1/2} \widehat{A} x\|_2
    \\
    &\leq
    \mineig^{-1}(\widehat{A}) 2 \rho_{1/2}^m 
    \|\widehat{\rhs}\|_2
    = 
    \mineig^{-1}
    (A) 2 \rho_{1/2}^m 
    \|\rhs\|_2
    \leq 2 \ScaleConst \rho_{1/2}^m ,
\end{align}
and hence we can ensure
$ \|  x- x_m\|_2 \leq \frac{\epsilon}{2} $ by the above choice of $m$.
\end{proof}

Before we eventually present the proof of Theorem~\ref{thm:cg}, we need to recall some results on \Chebname polynomials. Besides the ones of first kind, which we denoted by $T_j$, we further need the \Chebname polynomials of second kind, denoted by $U_j$. They are defined via the same three-term recurrence, starting however with $U_0(x) = 1$ and $U_1(x) = 2x$. 
The crucial identity for our construction is 
given by 
\begin{equation} \label{eq:rep_Tm_diff}
    \frac{T_m(x) - T_m(x_0)}{x-x_0} = 
    \sum\limits_{\ell=0}^{m-1} \alpha_\ell(x_0)  U_\ell (x)  ,
\end{equation}
with the coefficients 
\begin{equation}
\alpha_\ell (x_0) = 
\begin{cases}
    T_{m-1-\ell}(x_0) & \text{for $\ell = 0$ and $\ell = m-1$},
    \\
    2 T_{m-1-\ell}(x_0) & \text{for $1 \leq \ell \leq m -2$},
\end{cases}
\end{equation}
see \cite[Thm.~5]{KimL16}.
%
We can thus rewrite the polynomial $q_{m-1}$ as
\begin{align}
    q_{m-1}(z) 
    &=
  \frac{ T_m(\sigma(z)) - T_m(\sigma(0)) } {T_m(\sigma(0)) z}
   =
   - \frac{2}{(\Lambda- \lambda ) T_m(\sigma(0))}
  \frac{ T_m(\sigma(z)) - T_m(\sigma(0)) } {  \sigma(z) - \sigma(0)},
\end{align}
such that we can plug in \eqref{eq:rep_Tm_diff} for $x=\sigma (z)$ and $x_0=\sigma(0)$.
Further, if we define $$\alpha_{\max} = \max\limits_{\ell=0,\ldots,m-1} \alpha_\ell(\sigma(0)),$$
the main effort in the network can be traced back to the construction of an approximation of 
\begin{align} \label{eq:cheb_sum_scaled}
    \sum\limits_{\ell=0}^{m-1} 
    {\frac{
    \alpha_\ell(\sigma(0))
    }{\alpha_{\max}}}
     U_\ell (\sigma(\widehat{A})) 
    \widehat{r},
\end{align}
which is a \Chebname series that can be evaluated by the Clenshaw algorithm~\cite{Cle55} 
with $m$~matrix-vector multiplications. Indeed, for $p(x) = \sum_{\ell=0}^{m-1} \alpha_\ell U_\ell(x)$, the iteration reads
\begin{equation}
    b_k = \alpha_k + 2x b_{k+1} - b_{k+2}, \quad
    k=m-1,\ldots,0,
    \quad
    \text{with}
    \quad
    b_m = b_{m+1} = 0,
\end{equation}
and the result is given by $p(x) = b_0$. This is realized by 
the concatenation of the maps 
$\onestepoperatorVTWO_{\alpha_k}$ defined in
\eqref{eq:defOneStepOp_2},
and approximated by the neural networks constructed in 
Theorem~\ref{thm:one_step_affine2}.

\begin{proof}[Proof of Theorem~\ref{thm:cg}]

The proof is similar to the one of Theorem~\ref{thm:Richardson}. We approximate the construction in Lemma~\ref{lem:pol_approx_cg} for $m= m_\mathrm{cg}$ by a neural network and then apply the triangle inequality. 
As before, we first need to perform the rescaling of 
$\widehat{A} = \tfrac{1}{\Lambda}  A$
and $ \widehat{\rhs} = \tfrac{1}{\Lambda} \rhs$. Further, we need to compute
$\sigma(\widehat{A})$. Again, this is done in linear complexity and can thus be neglected. After computing 
\eqref{eq:cheb_sum_scaled}, we need also need to rescale in linear complexity. 

Thus, we focus on \eqref{eq:cheb_sum_scaled}. Note that the desired polynomial can be expressed via
\begin{equation}\label{eq:chainS}
\onestepoperatorVTWO_0 \circ \ldots \circ \onestepoperatorVTWO_{m-1} (X^0) , \quad X^0 = (\widehat{A},0,0),
\end{equation}
where $\onestepoperatorVTWO_k \coloneqq \onestepoperatorVTWO_{\alpha_k}$, cf.~\eqref{eq:defOneStepOp_2}.
Further, we set $\onestepoperatorVTWO_{i,j} = \onestepoperatorVTWO_{i} \circ \ldots \circ \onestepoperatorVTWO_{j}$  for $j > i$, 
$\onestepoperatorVTWO_{i,j} = \onestepoperatorVTWO_j$ for $i=j$, and $\onestepoperatorVTWO_{i,j} = \mathrm{id}$ the identity map for $j<i$.
We approximate~\eqref{eq:chainS} with the sparse concatenation 
$\NNs^{\onestepoperatorVTWO}_{i,j} = \NNs^\onestepoperatorVTWO_{i} \circ \ldots \circ \NNs^\onestepoperatorVTWO_{j}$, where each $\NNs^\onestepoperatorVTWO_{k}$ approximates~$\onestepoperatorVTWO_k$ as constructed in Theorem~\ref{thm:one_step_affine2} with 
accuracy $\delta = \delta(\epsilon) = {\epsilon}/{(2(m+1)^2)} > 0$ for input variables 
of size $z \leq 3 m^2$. 
Note that we use same convention on $i,j$ for 
$\NNs^{\onestepoperatorVTWO}_{i,j}$ 
as for 
$\onestepoperatorVTWO_{i,j}$.

(a) We now show by induction that for $i = m,\ldots,0$ 
\begin{equation} \label{eq:induction_hyp_cg}
    \| \onestepoperatorVTWO_{i,m-1} (X^0) - \NNs^{\onestepoperatorVTWO}_{i,m-1} (X^0)  \|_2 \leq \tfrac\epsilon2,
    \quad
    \| \NNs^{\onestepoperatorVTWO}_{i,m-1} (X^0)  \|_2 \leq 3 m^2,
\end{equation}
and note that the induction is trivially anchored at $i = m$.

Now let $0 \leq i \leq m-1$, assume that \eqref{eq:induction_hyp_cg} holds for $m,\ldots, i+1$,
and compute
\begin{align}
    \| \onestepoperatorVTWO_{i,m-1} &(X^0) - \NNs^{\onestepoperatorVTWO}_{i,m-1} (X^0) \|_2
    \\
    &\leq
    \sum\limits_{\ell=i}^{m-1}
     \|  \onestepoperatorVTWO_{i,\ell-1} ( \NNs^{\onestepoperatorVTWO}_{\ell,m-1} (X^0)) - \onestepoperatorVTWO_{i,\ell} ( \NNs^{\onestepoperatorVTWO}_{\ell+1,m-1} (X^0)) \|_2
    \\
    &=
    \sum\limits_{\ell=i}^{m-1}
        \| \onestepoperatorVTWO_{i,\ell-1}( \NNs_\ell ( \NNs^{\onestepoperatorVTWO}_{\ell+1,m-1} (X^0))) - \onestepoperatorVTWO_{i,\ell-1}( \onestepoperatorVTWO_\ell  ( \NNs^{\onestepoperatorVTWO}_{\ell+1,m-1} (X^0))) \|_2
    \\
     &=
    \sum\limits_{\ell=i}^{m-1}
     \| \onestepoperatorVTWO_{i,\ell-1} (\NNs_{{\ell}}^\onestepoperatorVTWO  (X^{m-1-\ell})) - \onestepoperatorVTWO_{i,\ell-1} (\onestepoperatorVTWO_{\ell} (X^{m-1-\ell})) \|_2
\end{align}
for 
$X^{m-1-\ell} = \NNs^{\onestepoperatorVTWO}_{\ell+1,m-1} (X^0)$. 
We emphasize that the first and last argument of $\onestepoperatorVTWO_\ell ( X^{m-1-\ell})$ and $ \NNs_\ell^\onestepoperatorVTWO ( X^{m-1-\ell} )$ are identical, and as above  we denote the second arguments by
$b$ and $\widetilde{b}$, respectively.
Thus, we have to consider the stability  of~$\onestepoperatorVTWO_{i,\ell-1}$ for such arguments. 
We first compute for $B =\sigma(\widehat{A})$
\begin{align}
\| \onestepoperatorVTWO_{i,\ell-1}  (B,b,c) - \onestepoperatorVTWO_{i,\ell-1}  (B,\widetilde{b},c) \|_2 
&=
\| ( 0 , U_{\ell-i}(B) (b-\widetilde{b}),   U_{\ell-i-1}(B) (b-\widetilde{b}))  \|_2
 \\
 &
\leq (2 \ell+1) \| b -\widetilde{b} \|_2 ,
\end{align}
with two applications of the estimate $|U_{j}(x)| \leq j+1$ for $x\in[-1,1]$.
Since $i +1 \leq \ell +1 \leq m$, we can apply \eqref{eq:induction_hyp_cg} to obtain the bounds on all the inputs 
$X^{m-1-\ell}$. 
Further, we thus can use the approximation property in
Theorem~\ref{thm:one_step_affine2} to obtain
$\| b -\widetilde{b} \|_2 \leq \delta$
and conclude by the summation over $\ell$ that
\begin{align}
    \| \onestepoperatorVTWO_{i,m-1} (X^0) - \NNs^{\onestepoperatorVTWO}_{i,m-1} (X^0) \|_2
    \leq
  (m+1)^2 \delta =  \tfrac\epsilon2 .
\end{align}
For the second part of the induction, we write
\begin{align}
    \| \NNs^{\onestepoperatorVTWO}_{i,m-1} (X^0) \|_2
&\leq
    \| \onestepoperatorVTWO_{i,m-1} (X^0)  \|_2
+
   \| \onestepoperatorVTWO_{i,m-1} (X^0) - \NNs^{\onestepoperatorVTWO}_{i,m-1} (X^0) \|_2
  \\  &\leq
  m^2 + 1 + \tfrac\epsilon2 \leq  3 m^2,
\end{align}
where we employed the estimate 
\begin{equation} \label{eq:stab_cheb}
    \|  \onestepoperatorVTWO_{i,m-1} (\widehat{A},0,0)  \|_2 \leq m^2 +1.
\end{equation}
The latter inequality follows from
\begin{equation}
\| \onestepoperatorVTWO_{\beta_k} \circ \ldots \circ \onestepoperatorVTWO_{\beta_1} (A,0,0) \|_2
=
\Bigl\|
\Bigl(A ,
\sum\limits_{\ell=1}^{k}
U_{k-\ell}(A) \beta_\ell,
\sum\limits_{\ell=1}^{k-1}
U_{k-\ell-1}(A) \beta_\ell\Bigr)
\Bigr\|_2
\leq k^2 + 1 ,
\end{equation}
for arbitrary coefficients $|\beta_j|\leq 1$,
using once again the estimates on $U_{j}$.
Since we can always choose $k \leq m $, the induction is closed.

(b) For the size of the network, we first observe that
with $m= m_\mathrm{cg}$
\begin{equation}
    \log_2 (\tfrac{1}{\delta}) \sim    \log_2 (\tfrac{1}{\epsilon})  +   \log_2 (m_\mathrm{cg}) 
\end{equation}
and the estimates in Theorem~\ref{thm:one_step_affine2} lead to
\begin{itemize}
\item[(i)] $
 \layersfun\bigl(\NNs^{\textup{cg}} \bigr)
 \leq
C_{\textup{cg}}
m_\textup{cg}
             \bigl(  \log_2 (\frac{1}{\epsilon})  
             +
             \log_2(n)
             +
             \log_2 (m_\mathrm{cg}) 
            \bigr),
             $
\item[(ii)]
$
\entriesfun\bigl(\NNs^{\textup{cg}}\bigr) 
    \leq  
    C_{\textup{cg}}
    m_\textup{cg}
    \bigl(  \log_2 (\frac{1}{\epsilon}) 
    +
    \log_2(n)
    +
    \log_2 (m_\mathrm{cg}) 
            \bigr)
    \numNZ,
    $     
\end{itemize} 
which is precisely the claimed network size.

Finally, we combine Lemma~\ref{lem:pol_approx_cg} with the above construction, which gives the result using the triangle inequality. 
\end{proof}

\section{Possible extensions}
\label{sec:extensions}

In this last section, we discuss three possible extensions of the presented approach. We only provide rough ideas and leave a rigorous treatment for future research. 

\subsubsection*{Pre-processing}
An important assumption of our constructions is that matrices are given with a pre-scribed (maximal) sparsity pattern. As the networks can only process inputs of a fixed size, matrices $A$ with even fewer entries (resulting in a shorter vector $A^\mathrm{v}$ containing the non-zero values) need to be prolonged to fit the maximal sparsity pattern. This requires a classical prolongation matrix, which can be set up with linear complexity. Such a pre-processing step could as well be realized by a neural network, which, however, then depends on the exact positions of the non-zero entries. 

\subsubsection*{Pre-conditioning}

The dependence on condition numbers in classical Richardson or cg iterations directly transfers over to our results. In particular, the number of required iterations (also in the network construction) suffers from large condition numbers, see also the discussion in Section~\ref{ss:appFEM} in the context of finite element-based methods. A natural procedure to avoid the problematic scaling  are pre-conditioning techniques.
To include pre-conditioning into our construction, the necessary operations to set up appropriate well-conditioned systems have to be approximated by neural networks as well. In case that the technique provably lowers the condition number for the classical algorithm, such results would also transfer to corresponding neural network approximations. However, a precise study is beyond the scope of this article.

\subsubsection*{Variable sparsity patterns}

One of the drawbacks of our construction is the necessary a-priori knowledge on the sparsity pattern of the class of matrices. In principle, it is also possible to only keep the number of non-zero entries fixed and derive information about the precise sparsity pattern of a matrix $A$ directly from the input. This requires besides $A^\mathrm{v}$ also information about the matrix entries (as usually given for, e.g., COO-type formats with vectors $A^\mathrm{r}$ and $A^\mathrm{c}$ containing the row and column indices of the non-zero entries, respectively). In order to realize the mapping~$(A,r) \mapsto Ar$ that is relevant for all the presented iterative procedures, for each row of $A$ we need a mapping that reduces the entries of $r$ to the corresponding non-zero entries in the corresponding row of $A$. Realizing such a mapping for all rows and all possible sparsity patterns, however, exceeds the derived complexity bounds. Therefore, we refrain from discussing possible constructions in more detail.

\subsection*{Acknowledgments}
B.~D\"orich and R.~Maier acknowledge funding from the Deut\-sche Forschungsgemeinschaft (DFG, German Research Foundation) -- Project-ID 258734477 -- SFB 1173. 
Parts of this work were conducted during B.~D\"orich's and R.~Maier's stay at the
Hausdorff Research Institute for Mathematics funded by the Deutsche Forschungsgemeinschaft (DFG, German Research Foundation) under Germany's Excellence Strategy – EXC-2047/2 – 390685813.
Preliminary results for this work have been developed in the Master thesis of L.~Ullmer at Karlsruhe Institute of Technology. 
Finally, we thank Matthias Deiml for fruitful discussions. 

\newcommand{\etalchar}[1]{$^{#1}$}


\begin{thebibliography}{GPR{\etalchar{+}}21}

\bibitem[AHP21]{AltHP21}
R.~Altmann, P.~Henning, and D.~Peterseim.
\newblock Numerical homogenization beyond scale separation.
\newblock {\em Acta Numer.}, 30:1--86, 2021.

\bibitem[Bar93]{Bar93}
A.~R. Barron.
\newblock Universal approximation bounds for superpositions of a sigmoidal function.
\newblock {\em IEEE Trans. Inform. Theory}, 39(3):930--945, 1993.

\bibitem[Cle55]{Cle55}
C.~W. Clenshaw.
\newblock A note on the summation of {C}hebyshev series.
\newblock {\em Math. Tables Aids Comput.}, 9:118--120, 1955.

\bibitem[Cyb89]{Cyb89}
G.~Cybenko.
\newblock Approximation by superpositions of a sigmoidal function.
\newblock {\em Math. Control Signals Systems}, 2(4):303--314, 1989.

\bibitem[EGJS22]{ElbGJS21}
D.~Elbr{\"a}chter, P.~Grohs, A.~Jentzen, and C.~Schwab.
\newblock D{NN} expression rate analysis of high-dimensional {PDE}s: application to option pricing.
\newblock {\em Constr. Approx.}, 55(1):3--71, 2022.

\bibitem[GKP20]{GueKP20}
I.~G\"uhring, G.~Kutyniok, and P.~Petersen.
\newblock Error bounds for approximations with deep {R}e{LU} neural networks in {$W^{s,p}$} norms.
\newblock {\em Anal. Appl. (Singap.)}, 18(5):803--859, 2020.

\bibitem[GPR{\etalchar{+}}21]{GeiPRSK21}
M.~Geist, P.~Petersen, M.~Raslan, R.~Schneider, and G.~Kutyniok.
\newblock Numerical solution of the parametric diffusion equation by deep neural networks.
\newblock {\em J. Sci. Comput.}, 88(1):Paper No. 22, 37, 2021.

\bibitem[GRK23]{GuhRK23}
I.~G\"uhring, M.~Raslan, and G.~Kutyniok.
\newblock Expressivity of deep neural networks.
\newblock In {\em Mathematical aspects of deep learning}, pages 149--199. Cambridge Univ. Press, Cambridge, 2023.

\bibitem[HSW89]{HorSW89}
K.~Hornik, M.~Stinchcombe, and H.~White.
\newblock Multilayer feedforward networks are universal approximators.
\newblock {\em Neural Netw.}, 2(5):359--366, 1989.

\bibitem[KL16]{KimL16}
S.~H. Kim and J.~H. Lee.
\newblock On difference quotients of {C}hebyshev polynomials.
\newblock {\em Bull. Korean Math. Soc.}, 53(2):373--386, 2016.

\bibitem[KMP22]{KroMP22}
F.~Kr\"{o}pfl, R.~Maier, and D.~Peterseim.
\newblock Operator compression with deep neural networks.
\newblock {\em Adv. Contin. Discrete Models}, Paper No. 29, 23, 2022.

\bibitem[KMP23]{KroMP23}
F.~Kr\"{o}pfl, R.~Maier, and D.~Peterseim.
\newblock Neural network approximation of coarse-scale surrogates in numerical homogenization.
\newblock {\em Multiscale Model. Simul.}, 21(4):1457--1485, 2023.

\bibitem[KPRS22]{KutPRS22}
G.~Kutyniok, P.~Petersen, M.~Raslan, and R.~Schneider.
\newblock A theoretical analysis of deep neural networks and parametric {PDE}s.
\newblock {\em Constr. Approx.}, 55(1):73--125, 2022.

\bibitem[MP20]{MalP20}
A.~M{\aa}lqvist and D.~Peterseim.
\newblock {\em Numerical homogenization by localized orthogonal decomposition}, volume~5 of {\em SIAM Spotlights}.
\newblock Society for Industrial and Applied Mathematics (SIAM), Philadelphia, PA, 2020.

\bibitem[MS23]{MarS23}
C.~Marcati and C.~Schwab.
\newblock Exponential convergence of deep operator networks for elliptic partial differential equations.
\newblock {\em SIAM J. Numer. Anal.}, 61(3):1513--1545, 2023.

\bibitem[OPS20]{OpsJPS20}
J.~A.~A. Opschoor, P.~C. Petersen, and C.~Schwab.
\newblock Deep {ReLU} networks and high-order finite element methods.
\newblock {\em Anal. Appl.}, 18(05):715--770, 2020.

\bibitem[OS24]{OpsS24}
J.~A.~A. Opschoor and C.~Schwab.
\newblock Deep {R}e{LU} networks and high-order finite element methods {II}: {C}heby\v sev emulation.
\newblock {\em Comput. Math. Appl.}, 169:142--162, 2024.

\bibitem[PV18]{PetV18}
P.~Petersen and F.~Voigtlaender.
\newblock Optimal approximation of piecewise smooth functions using deep {ReLU} neural networks.
\newblock {\em Neural Netw.}, 108:296--330, 2018.

\bibitem[RBP26]{RomBP26*}
G.~Romera and J.~A. Bárcena-Petisco.
\newblock A theoretical analysis on the inversion of matrices via neural networks designed with {S}trassen algorithm.
\newblock {\em ArXiv Preprint}, 2501.06539, 2026.

\bibitem[Ric11]{Ric1910}
L.~F. Richardson.
\newblock {IX.} the approximate arithmetical solution by finite differences of physical problems involving differential equations, with an application to the stresses in a masonry dam.
\newblock {\em Philos. Trans. R. Soc. Lond. A.}, 210(459-470):307--357, 01 1911.

\bibitem[Saa03]{Saa03_book}
Y.~Saad.
\newblock {\em Iterative methods for sparse linear systems}.
\newblock Society for Industrial and Applied Mathematics, Philadelphia, second edition, 2003.

\bibitem[Str69]{Str69}
V.~Strassen.
\newblock Gaussian elimination is not optimal.
\newblock {\em Numer. Math.}, 13:354--356, 1969.

\bibitem[Yar17]{Yar17}
D.~Yarotsky.
\newblock Error bounds for approximations with deep {ReLU} networks.
\newblock {\em Neural Netw.}, 94:103--114, 2017.

\end{thebibliography}
\end{document}